\documentclass[11pt]{article}

\usepackage{lmodern}
\usepackage{csquotes}
\usepackage{amsmath,amssymb}
\usepackage{parskip}
\usepackage{graphicx}
\usepackage{booktabs, tabularx}
\usepackage{multirow}
\usepackage{hyperref}
\usepackage{float}
\usepackage{adjustbox}
\usepackage[labelfont=bf]{caption, subcaption}
\usepackage[dvipsnames, svgnames]{xcolor}
\usepackage[font=small]{caption}

\DeclareMathOperator{\dom}{dom}
\DeclareMathOperator{\R}{\mathbb{R}}
\DeclareMathOperator{\Z}{\mathbb{Z}}

\DeclareMathOperator{\D}{\mathcal{D}}
\DeclareMathOperator{\N}{\mathcal{N}}
\DeclareMathOperator{\A}{\mathcal{A}}
\newcommand{\X}{\mathcal{X}}
\DeclareMathOperator{\Y}{\mathbb{Y}}
\DeclareMathOperator{\Ycal}{\mathcal{Y}}
\newcommand{\F}{\mathcal{F}}

\renewcommand{\H}{\mathcal{H}}

\DeclareMathOperator{\bin}{\{0, 1\}}
\DeclareMathOperator{\st}{s.t.}

\DeclareMathOperator{\conv}{conv}

\DeclareMathOperator{\proj}{proj}

\DeclareMathOperator{\gph}{gph}
\DeclareMathOperator{\hyp}{hyp}
\DeclareMathOperator{\rec}{rec}
\newcommand{\dotcup}{\mathbin{\mathaccent\cdot\cup}}
\newcommand{\ie}{\emph{i.e.}}
\newcommand{\eg}{\emph{e.g.}}

\newcommand{\qedwhite}{\hfill \ensuremath{\Box}}
\newcommand{\xbar}{\widebar{x}}
\newcommand{\xhat}{\widehat{x}}

\newcommand{\yhat}{\widehat{y}}
\newcommand{\zbar}{\widebar{z}}
\newcommand{\zhat}{\widehat{z}}
\newcommand{\pihat}{\widehat{\pi}}
\newcommand{\lambdahat}{\widehat{\lambda}}
\newcommand{\gammahat}{\widehat{\gamma}}
\newcommand{\idx}{\texttt{i}}
\newcommand{\responseSet}{\mathcal{R}}

\definecolor{mygreen}{RGB}{0, 160, 0}
\definecolor{mypurple}{RGB}{204, 0, 204}
\definecolor{myorange}{RGB}{245, 118, 0}
\definecolor{myblue}{RGB}{0, 128, 255}
\definecolor{myyellow}{RGB}{196, 175, 24}

\usepackage{tikz, tikz-3dplot}
    \usetikzlibrary{arrows, shapes}
\usepackage{pgf}

\usepackage[letterpaper,margin=1in]{geometry}
\usepackage{natbib}
\usepackage{amsthm}
\usepackage{bm}
\usepackage{authblk}
\usepackage{microtype}
\hypersetup{hidelinks}
\DeclareMathOperator*{\argmin}{arg\,min}
\newcommand{\widebar}[1]{\overline{#1}}

\newtheorem{lemma}{Lemma}
\newtheorem{proposition}{Proposition}
\newtheorem{corollary}{Corollary}
\theoremstyle{definition}
\newtheorem{example}{Example}
\theoremstyle{remark}
\newtheorem{remark}{Remark}

\renewcommand{\proof}[1]{\par\noindent\textit{#1}\ }
\newcommand{\Halmos}{\ensuremath{\square}}

\title{Network flow-based cuts for binary bilevel programs}
\author[1]{Sebasti\'an V\'asquez\thanks{\texttt{savasque@andrew.cmu.edu}}}
\author[2]{Leonardo Lozano\thanks{\texttt{leolozano@uc.edu}}}
\author[1]{Willem-Jan van Hoeve\thanks{\texttt{vanhoeve@andrew.cmu.edu}}}
\affil[1]{Carnegie Mellon University}
\affil[2]{University of Cincinnati}
\date{}

\begin{document}
\maketitle

\begin{abstract}
Binary bilevel programs are challenging to solve due to the lack of compact single-level reformulations. Recent extended network flow-based reformulations have emerged as a promising approach, providing strong dual bounds. However, their increasing size can make them computationally prohibitive. We develop a cutting-plane framework that exploits these networks without requiring an extended-space formulation. We introduce two exact network flow-based reformulations and provide an efficient separation procedure for inequalities that certify the follower’s optimality. We further strengthen these inequalities through a tilting procedure that leverages the combinatorial structure of the networks. Computational experiments show that the proposed approach becomes increasingly advantageous as networks grow larger or more numerous. Moreover, incorporating the cuts into state-of-the-art bilevel solvers improves solution times and optimality gaps on BOBILib instances, closing eight more open instances than the baseline solvers. These results demonstrate the effectiveness of using network representations as separation oracles for binary bilevel optimization.
\end{abstract}

\noindent\textbf{Keywords:} Binary bilevel programming, cutting planes, network flows

\medskip
\noindent\textbf{Funding:} Dr. Lozano was supported by AFOSR grant FA9550-25-1-0301.

\medskip


\section{Introduction}\label{sec:intro}

Bilevel optimization represents hierarchical interactions between two decision-makers, commonly referred to as the leader and the follower, who optimize distinct objective functions subject to individual and coupling constraints \citep{bracken_mathematical_1973}. The leader has complete knowledge of the follower's problem, and only controls its own variables, while the follower selects an optimal response after observing the leader’s decisions. If the follower admits multiple optimal responses, under the optimistic setting the follower selects one that is most favorable to the leader, whereas under the pessimistic setting, it selects one that is least favorable. The overall problem consists in finding a leader decision that optimizes the leader’s objective while accounting for the follower’s optimal reaction.

For continuous bilevel programs, single-level reformulations can often be obtained by replacing the follower’s problem with its Karush-Kuhn-Tucker conditions or by exploiting strong duality. These reformulations typically yield nonlinear programs or mathematical programs with complementarity constraints~\citep{fortuny1981representation,bard1990branch,zare2019note}. Such techniques, however, do not directly apply to problems with integer follower decisions, which makes integer bilevel programs substantially more difficult to reformulate and solve. State-of-the-art exact methods for integer bilevel optimization generally extend mixed-integer programming techniques by deriving and separating logic-based and intersection cuts~\citep{fischetti2017flms,tahernejad2020mibs}. Other recent papers leverage network flow-based representations of the follower's problem to obtain single-level reformulations and relaxations, and have achieved competitive results~\citep{lozano_constrained_2022,LBC24,vasquez2025single}. The main source of computational overhead of the latter approaches is the impact of the size of the networks, as the reformulations are parametrized by the components of the network. Hence, if the network grows exponentially large, so will the reformulations.

In this work, we provide an alternative method to recover the objective value bound obtained with the decision diagram approach in \cite{LBC24} through valid inequalities. This allows to keep the single-level relaxation on the original variable space, improving its computational tractability. We first propose two new exact network flow-based single-level reformulations of the binary bilevel program. Then, we show how to separate the inequalities required to convexify the set of follower's optimal responses. This, in turn, allows to design a cutting plane approach that inherits the properties proposed in \cite{LBC24} to yield dual bounds by using size-controlled networks. Finally, inspired by the cut strengthening techniques in \cite{behle2007binary} and \cite{castro2022lifting}, we exploit the combinatorial structure of the networks to improve our cuts via tilting. We note that the proposed method is related to the one in \cite{lefebvre2025DW}, where the dual of a Dantzig-Wolfe representation of the follower's feasible solution set is used to build a bilinear single-level reformulation. In contrast, we work on an extended primal space induced by flow variables over networks. This allows to design exact linear reformulations, avoiding nonlinearities. Our method is also related to the one in \cite{chen2025recovering}, which focuses on recovering a Dantzig-Wolfe reformulation bound via linear inequalities. However, in the bilevel setting, the feasible solution set is not explicitly represented, hindering the direct construction of outer approximations. The network flow encoding provides a more compact approximation of the follower's optimal value, while exhibiting a combinatorial structure over which efficient graph algorithms can be executed to build and strengthen outer approximations of the feasible solution set.


\noindent \textbf{Contributions.}
Our first contribution is the design of two network flow-based single-level reformulations of binary bilevel programs. They replace the follower's optimality condition with a family of valid inequalities, for which we also provide options for their separation. Our second contribution is the strengthening of these inequalities via tilting. Inspired by the work of \cite{castro2022lifting}, we leverage the combinatorial structure of the networks to provide a relatively efficient way to tilt valid cuts. Lastly, our third contribution is a comprehensive computational study of the performance of the new cut families. In particular, we show that our cuts are especially effective for large networks or in a decomposition setting with multiple independent networks, and assess their portability to benchmark BOBILib instances by incorporating them into state-of-the-art solvers.


\noindent \textbf{Organization.}
The remainder of the paper is organized as follows. In Section \ref{sec:preliminaries}, we introduce relevant definitions, notation, and previous work on network flow-based reformulations for binary bilevel optimization. In Section \ref{sec:new-reformulations}, we present our new reformulations and cuts, and explain how the latter can be separated. In Section \ref{sec:cut-strengthening}, we describe how to use the networks to strengthen cuts. In Section \ref{sec:experiments}, we present numerical results assessing the effectiveness of our cuts. Finally, we conclude the work in Section \ref{sec:conclusions}.

\section{Preliminaries} \label{sec:preliminaries}
We consider the binary linear bilevel program
\begin{subequations} \label{model:BBP}
    \begin{alignat}{2}
        \min_{x, y} & & \hspace{.2cm} & c_L^\top x +c_F^\top y \\
        \st & & & Ax + By \leq a \\
        & & & x \in \bin^{n_L} \\
        & & & y \in \responseSet \left(x \right), \label{constr:BBP-ReactionSet}
    \end{alignat}
\end{subequations}
where
\[
    \responseSet \left(x\right) := \argmin_y \big\{ d^\top y :
    Dy \leq b - Cx, \, y \in \Ycal \subseteq \R^{n_F} \big\}
\]
%
is the follower's optimal response set,~$\Y(x) = \{y \in \Ycal : Dy \leq b - Cx\}$ is the parametric follower's feasible solution set, and~$\dom \Y = \{x \in \bin^{n_L}: \Y(x) \neq \emptyset\}$ is the set of binary leader decisions that render the follower's problem feasible. 
Note that program \eqref{model:BBP} can also model bounded integer leader variables $x$ through binary expansion. After applying a linear transformation to make the leader variables nonnegative, we introduce a new variable $x^j \in \bin^{\left\lfloor \log(u_j) \right\rfloor + 1}$ for each $j \in [n_L]$ such that $x_j \in \Z$, where $u_j > 0$ is the upper bound on $x_j$ after the transformation. This way, by replacing $x_j$ with $\sum_{i = 0}^{\left\lfloor \log(u_j) \right\rfloor} 2^i x^j_i$, we can retrieve a binary bilevel program of the form \eqref{model:BBP}.

We refer to function $\phi(x) = \min\{ d^\top y: y \in \Y(x)\}$ as the follower's value function, which computes the optimal value of the follower's problem for a given vector $x$. We assume that if $x \not\in \dom \Y$, then $\phi(x) = +\infty$.
The \textit{High-Point Relaxation} (HPR) corresponds to the program obtained from relaxing \eqref{constr:BBP-ReactionSet} from \eqref{model:BBP}, \ie,~$\min\{c_L^\top x + c_F^\top y: (x, y) \in \H \}$, where~$\H := \{(x, y) \in \bin^{n_L} \times \Ycal: Ax + By \leq a,\ Cx + Dy \leq b\}$.
Therefore, program \eqref{model:BBP} can be reformulated as
\begin{equation} \label{model:BBP-valueFunctionRef}
    \min\{ c_L^\top x + c_F^\top y : (x, y) \in \H,\ d^\top y \leq \phi(x) \},
\end{equation}
which is known as the \textit{value function reformulation}. We denote as~$\F := \{ (x, y) \in \H: d^\top y \leq \phi(x) \}$ the set of feasible solutions to \eqref{model:BBP-valueFunctionRef}, also known as \textit{bilevel-feasible set}.

\begin{example} \label{ex:example1}
    As a running example, consider the integer bilevel program in Figure~\ref{fig:example1-model} with follower's optimal response set $\responseSet(x)$ in Figure~\ref{fig:example1-R}. Figure~\ref{fig:example1-definitions} illustrates the relevant sets of the example program, where black lines indicate the follower's feasible set $\Y(x)$ and orange dashed lines the leader's feasible set. Black dots correspond to follower feasible responses, where the green ones are optimal. Point $(3, 3)$ is not bilevel-feasible as it violates leader's constraint $2x + 2y \leq 11$. Therefore, the remaining three points $(0, 1)$, $(1, 1)$, and $(2, 0)$ form the bilevel-feasible set $\F$. An optimal solution is obtained by optimizing the leader's objective function over $\F$, which corresponds to $(1, 1)$ with an objective value $-5$. The HPR, illustrated in Figure~\ref{fig:example1-HPR}, achieves optimal solution $(2, 3)$ with objective value $-14$. 
    
    The associated binary expansion introduces new variables $x_1, x_2, y_1, y_2 \in \bin$ which satisfy $x = x_1 + 2x_2$ and $y = y_1 + 2y_2$, yielding the binary bilevel program
    \begin{alignat*}{2}
        \min & & \hspace{.2cm} & -x_1 - 2x_2 - 4y_1 - 8y_2 \\
        \st & & & 2x_1 + 4x_2 - 2y_1 - 4y_2 \leq 3 \\
        & & & 2x_1 + 4x_2 + 2y_1 + 4y_2 \leq 11 \\
        & & & x_1, x_2 \in \bin \\
        & & & y_1 + 2y_2 \in \responseSet(x_1 + 2x_2). 
    \end{alignat*}

    Its optimal solution is $(x_1, x_2, y_1, y_2) = (1, 0, 1, 0)$, corresponding to $x = 1$, $y = 1$, with objective value $-5$. We keep this binary representation throughout this work for illustrative purposes. \qedwhite
\end{example}

\begin{figure}[t]
    \centering

    \begin{subfigure}[t]{0.31\textwidth}
        \centering
        \begin{minipage}[c][3.2cm][c]{\linewidth}
            \centering
            \small
            \begin{adjustbox}{max width=\linewidth,center}
                $\displaystyle
                \begin{aligned}
                    \min_{x,y}\quad & -x - 4y \\
                    \st\quad        & 2x - 2y \leq 3 \\
                                    & 2x + 2y \leq 11 \\
                                    & x \in \Z \\
                                    & y \in \responseSet(x)
                \end{aligned}$
            \end{adjustbox}
        \end{minipage}
        \caption{Bilevel model}
        \label{fig:example1-model}
    \end{subfigure}
    \hfill
    \begin{subfigure}[t]{0.34\textwidth}
        \centering
        \begin{minipage}[c][3.2cm][c]{\linewidth}
            \centering
            \small
            \begin{adjustbox}{max width=\linewidth,center}
                $\displaystyle
                \begin{aligned}
                    \argmin_y \quad    & y \\
                    \st\quad        & 3 - 2x \leq 4y \leq 18 - 2x \\
                                    & -9 + 4x \leq y \leq 3 + 4x \\
                                    & 4y \leq 12 + x \\
                                    & y \in \Z_+
                \end{aligned}$
            \end{adjustbox}
        \end{minipage}
        \caption{Follower's optimal response set $\responseSet(x)$}
        \label{fig:example1-R}
    \end{subfigure}
    \hfill
    \begin{subfigure}[t]{0.31\textwidth}
        \centering
        \begin{minipage}[c][3.2cm][c]{\linewidth}
            \centering
            \small
            \begin{adjustbox}{max width=\linewidth,center}
                $\displaystyle
                \begin{aligned}
                    \min_{x,y} \quad & -x - 4y \\
                    \st\quad         & 2x - 2y \leq 3 \\
                                     & 2x + 2y \leq 11 \\
                                     & x \in \Z \\
                                     & y \in \Y(x)
                \end{aligned}$
            \end{adjustbox}
        \end{minipage}
        \caption{High-point relaxation}
        \label{fig:example1-HPR}
    \end{subfigure}

    \vspace{.3cm}
    \caption{Example integer bilevel programming model (a), the follower's optimal response set (b), and the associated high-point relaxation (c).}
\end{figure}

The graph of function $\phi$ corresponds to the set~$\gph \phi = \{(x, z) \in \dom \Y \times \R \cup \{ +\infty \} : z = \phi(x) \}$, and the hypograph of $\phi$ corresponds to the set~$\hyp \phi = \{(x, z) \in \dom \Y \times \R \cup \{ +\infty \} : z \leq \phi(x) \}$. We can now write program \eqref{model:BBP} as
\begin{equation} \label{model:BBP-HypReformulation}
    \min \left\{ c_L^\top x + c_F^\top y : (x, y) \in \H,\ ( x, d^\top y ) \in \hyp \phi \right\}.
\end{equation}

Figure \ref{fig:example1-hypPhi} illustrates $\hyp \phi$ for the follower's value function of the program in Example~\ref{ex:example1}.

\begin{figure}[t]
    \centering
    \begin{subfigure}[t]{0.5\textwidth}
        \centering
        \begin{tikzpicture}[scale=1.00, every node/.style={font=\footnotesize}]
            \draw[very thin, color=gray] (-.9, -.1) grid (3.8, 3.8); 
        
            \draw[->, line width=1] (-1.1,0) -- (4, 0) node[right] {$x$};
            \draw[->, line width=1] (0, 0) -- (0, 4) node[above] {$y$};
            \foreach \x in {0, 1, 2, 3}
                \draw (\x cm, -1pt) -- (\x cm,-3pt) node[anchor=north] {$\x$};
            \foreach \y in {1, 2, 3}
                \node[anchor=east] at (-.9, \y cm) {$\y$};
        
            \draw[fill=gray, opacity=0.25] (-0.5, 1) -- (2, 3.5) -- (2.9, 2.6) -- (2.25, 0) -- (1.5, 0) -- (-0.5, 1);
        
            \draw[->, thick, myblue, line width=1.5] (4.25, 3) -- (4.25, 2.5) node[right] {\color{myblue}\small \textbf{Follower}};
            \draw[->, thick, line width=1.5] (4.25, 3) -- (4.5, 3.5) node[right] {\small \textbf{Leader}};
            \node[fill=black, circle, scale=.2] at (4.25, 3) {};
        
            \draw[line width=2.5] (-0.5, 1) -- (0, 3) -- (2, 3.5) -- (3, 3) -- (2.25, 0) -- (1.5, 0) -- (-0.52, 1.02);
            \draw[dashed, line width=2.5] (-0.75, 0.75) -- (2, 3.5) -- (3.75, 1.75);
            
            \node[fill=black, circle, scale=.6] at (0, 3) {};
            \node[fill=black, circle, scale=.6] at (0, 2) {};
            \node[fill=black, circle, scale=.6] at (1, 2) {};
            \node[fill=black, circle, scale=.6] at (1, 3) {};
            \node[fill=black, circle, scale=.6] at (2, 1) {};
            \node[fill=black, circle, scale=.6] at (2, 2) {};    
            \node[fill=black, star, scale=.6] at (2, 3) {};
        
            \node[fill=myblue, circle, scale=.6] at (0, 1) {};
            \node[fill=myblue, star, scale=.6] at (1, 1) {};
            \node[fill=myblue, circle, scale=.6] at (2, 0) {};    
            \node[fill=myblue, circle, scale=.6] at (3, 3) {};
        
            \node[] at (2.6, 3.6) {$\bm{\Y(x)}$};
            \node[] at (1.5, 1.5) {\color{gray} $\bm{\H}$};
            \node[] at (1.5, 0.5) {\color{myblue} $\bm{\responseSet(x)}$};
        \end{tikzpicture}
        \caption{Sets $\Y(x)$, $\H$, and $\responseSet(x)$.}
        \label{fig:example1-definitions}
    \end{subfigure}%
    ~\hspace{.7cm}
    \begin{subfigure}[t]{0.5\textwidth}
        \centering
        \begin{tikzpicture}[scale=1.00, every node/.style={font=\footnotesize}]
            \draw[very thin, color=gray] (-.9, -.1) grid (3.9, 3.9); 
        
            \draw[->, line width=1] (-1.1,0) -- (4, 0) node[right] {$x$};
            \draw[->, line width=1] (0, 0) -- (0, 4) node[above] {$y$};
            \foreach \x in {0, 1, 2, 3}
                \draw (\x cm, -1pt) -- (\x cm,-3pt) node[anchor=north] {\color{white}$\x$};
            \foreach \y in {1, 2, 3}
                \node[anchor=east] at (-.9, \y cm) {\color{white}$\y$};
        
        
            \draw[line width=2.5] (-0.5, 1) -- (0, 3) -- (2, 3.5) -- (3, 3) -- (2.25, 0) -- (1.5, 0) -- (-0.52, 1.02);
            
            \node[fill=black, circle, scale=.6] at (0, 3) {};
            \node[fill=black, circle, scale=.6] at (0, 2) {};
            \node[fill=black, circle, scale=.6] at (1, 2) {};
            \node[fill=black, circle, scale=.6] at (1, 3) {};
            \node[fill=black, circle, scale=.6] at (2, 1) {};
            \node[fill=black, circle, scale=.6] at (2, 2) {};    
            \node[fill=black, circle, scale=.6] at (2, 3) {};

            \draw[->, line width=2, myblue] (0, 1) -- (0, -0.35);
            \draw[->, line width=2, myblue] (1, 1) -- (1, -0.35);
            \draw[->, line width=2, myblue] (2, 0) -- (2, -0.35);
            \draw[->, line width=2, myblue] (3, 3) -- (3, -0.35);
        
            \node[fill=myblue, circle, scale=.6] at (0, 1) {};
            \node[fill=myblue, star, scale=.6] at (1, 1) {};
            \node[fill=myblue, circle, scale=.6] at (2, 0) {};    
            \node[fill=myblue, circle, scale=.6] at (3, 3) {};
        
        \end{tikzpicture}
        \caption{\small Hypograph of $\phi$.}
        \label{fig:example1-hypPhi}
    \end{subfigure}
    \vspace{.2cm}
    \caption{Sets for the example integer bilevel program. The leader's feasible set is depicted by the region enclosed by the dashed lines. The bilevel-feasible set $\F$ consists of pairs $(x,y)$ in this region satisfying $y \in \responseSet(x)$. In figure (b), the blue downward rays, including their upper endpoints, represent the hypograph of $\phi$, containing all points $(x,d^\top y)$ with $d^\top y \le \phi(x)$; here, $d^\top y=y$.}
    \label{fig:example1}
\end{figure}

\medskip

\textbf{Computing the value function via network flows.}
To compute the follower's value function via linear programming, \cite{LBC24} propose a network flow-based reformulation. The authors first show how to compile a layered directed acyclic network~$\D_{\gph \phi} = (\N, \A, l, \nu)$, with root node $r$ and sink node $t$, that encodes all points $(x, z)$ in $\gph \phi$ through $r$-$t$ paths. Here, $\N$ is a set of nodes, $\A$ is a set of arcs, $l$ is an arc-length vector, and $\nu$ is an arc-value vector. Furthermore, the arc set $\A$ is partitioned as $\A = \A^0_L \dotcup \A^1_L \dotcup \A_F$, where $\A^0_L$ and $\A^1_L$ are the set of arcs with value $0$ and $1$, respectively, encoding $x$ values, and $\A_F$ is the set of arcs encoding $z$ values. Moreover, arc lengths in $l$ are set to $0$ for arcs in $\A_L = \A^1_L \cup \A^0_L$, and to the evaluation of $\phi$ for arcs in $\A_F$. For any arc $a \in \A_L$, we denote as $\idx(a)$ the component of $x$ associated with $a$. This way, for any point $(\xhat, \zhat) \in \gph \phi$, there is exactly one $r$-$t$ path $\mathcal{P} = \{a_1, \ldots, a_{n_L + 1}\}$ such that $\xhat_{\idx(a)} = \nu_a$ for all $a \in \mathcal{P} \setminus \{a_{n_L + 1}\}$ and $\zhat = \sum_{a \in \mathcal{P}} l_a = l_{a_{n_L + 1}}$. Figure \ref{fig:exampleNetwork} depicts the network that encodes the graph of the follower's value function associated with the binary expansion of the problem in the running example~\ref{ex:example1}. Given $\D_{\gph \phi}$, we have that
\begin{subequations} \label{model:LBC-graphReformulation}
    \begin{alignat}{3}
        \min & & \hspace{.2cm} & c_L^\top x + c_F^\top y \\
        \st & & & (x, y) \in \H \\
        & & & \sum_{a \in \delta^+(u)} w_a - \sum_{a \in \delta^-(u)} w_a = \mathbf{1}(u = r) - \mathbf{1}(u = t) & \hspace{.5cm} & u \in \N \label{constr:LBC-graphReformulation-flow} \\
        & & & x_j = \sum_{a \in \A_L: \idx(a) = j} \nu_a w_a & & j \in [n_L] \label{constr:LBC-graphReformulation-x} \\
        & & & d^\top y \leq l^\top w \label{constr:LBC-graphReformulation-z} \\
        & & & w \geq 0. \label{constr:LBC-graphReformulation-w}
    \end{alignat}
\end{subequations}
is an exact reformulation of program \eqref{model:BBP-HypReformulation}, where $\mathbf{1}(\cdot)$ denotes the indicator function. Note that directly solving program~\eqref{model:LBC-graphReformulation} with an off-the-shelf MIP solver may quickly become intractable as~$\D_{\gph \phi}$ grows. To address this issue, \cite{LBC24} propose relaxations obtained from replacing~$\D_{\gph \phi}$ in \eqref{model:LBC-graphReformulation} with an approximate value network. This corresponds to another network $\D_{\gph \widehat{\phi}}$ that encodes points in~$\gph \widehat{\phi}(x)$, for some upper bounding function~$\widehat{\phi} \geq \phi$. This approach leads to a trade-off between network size and bound quality: in general, larger networks provide stronger bounds, as they encode the graph of more accurate approximations of $\phi$. Thus, as long as compilation remains tractable, it is desirable to work with larger networks.

\begin{figure}[t]
    \centering
    \begin{tikzpicture}[scale=0.75, ->, >=stealth', shorten >=1pt, auto, node distance=3cm, thick, 
            main node/.style={circle, draw, minimum size=4mm, scale=.75},
            special node/.style={circle, draw, font=\sffamily\bfseries, minimum size=4mm, scale=.85}
        ]
        \node[special node] (r) at (0, 0) {\small r};
        
        \node[main node] (1) at (-1.5, -2) {};
        \node[main node] (2) at (1.5, -2) {};
        \node[main node] (3) at (-2, -4) {};
        \node[main node] (4) at (0, -4) {};
        \node[main node] (5) at (2, -4) {};
    
        \node[special node] (t) at (0, -6.5) {\small t};
    
        \node[] (x1) at (3, -1) {\small $\bm{x_1}$};
        \node[] (x2) at (3, -3) {\small $\bm{x_2}$};
        \node[] (z) at (3, -5) {\small \color{myblue} $\bm{z}$};
    
        \path[every node/.style={font=\sffamily\footnotesize,
            fill=white, inner sep=1pt}]
            (r) edge [dashed, line width=1.5] (1)
                edge [line width=1.5] (2)
            (1) edge [line width=1.5] (3)
                edge [dashed, line width=1.5] (4)
            (2) edge [dashed, line width=1.5] (4)
                edge [line width=1.5] (5)
            (3) edge [myblue, line width=1.5] node [pos=0.5, anchor=center, minimum size=5mm] {\small $\bm{0}$} (t)
            (4) edge [myblue, line width=1.5] node [pos=0.5, anchor=center, minimum size=5mm] {\small $\bm{1}$} (t)
            (5) edge [myblue, line width=1.5] node [pos=0.5, anchor=center, minimum size=5mm] {\small $\bm{3}$} (t);
    \end{tikzpicture}
    \vspace{.2cm}
    \caption{Network $\D_{\gph \phi}$ used to compute the follower's value function of the running example, using the binary representation $x = x_1 + 2x_2$. Black arcs are associated with $x$ values and blue arcs with $z$ values. Dashed arcs represent 0-arcs and solid arcs represent 1-arcs. Arc labels represent lengths.}
    \label{fig:exampleNetwork}
\end{figure}

\section{Network flow-based reformulations} \label{sec:new-reformulations}
We introduce two alternative network flow-based reformulations of \eqref{model:hypConvReformulation} based on $\D_{\gph \phi}$. Both reformulations can be systematically outer approximated via linear inequalities obtained from $\D_{\gph \phi}$, which allows to use the network as an oracle and, thus, address the tractability issues of directly solving program \eqref{model:LBC-graphReformulation}. First, we state a simple yet important result that serves as the cornerstone of our reformulations. We refer to $\conv \X$ as the convex hull of set $\X$.

\begin{lemma} \label{thm:hypConvRelaxation}
    $\hyp \phi = \conv(\hyp \phi) \cap (\bin^{n_L} \times \R)$
\end{lemma}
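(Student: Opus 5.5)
The inclusion $\hyp \phi \subseteq \conv(\hyp \phi) \cap (\bin^{n_L} \times \R)$ is immediate, since every set lies in its convex hull and, by definition, $\hyp \phi \subseteq \dom \Y \times (\R \cup \{+\infty\})$ with $\dom \Y \subseteq \bin^{n_L}$. (If $\phi(x) = +\infty$ is allowed in the second coordinate, we restrict attention to the finite part, which is what the reformulation uses; points with $x \notin \dom \Y$ are excluded on both sides.) The content of the lemma is therefore the reverse inclusion, and the plan is to prove it by exploiting the fact that binary points are extreme points of the cube $[0,1]^{n_L}$.

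Take $(\xhat, \zhat) \in \conv(\hyp \phi)$ with $\xhat \in \bin^{n_L}$. By Carath\'eodory's theorem, write
\[
(\xhat, \zhat) = \sum_{k=1}^{m} \lambda_k (x^k, z^k), \qquad \lambda_k > 0, \quad \sum_{k} \lambda_k = 1,
\]
where each $(x^k, z^k) \in \hyp \phi$, so $x^k \in \dom \Y \subseteq \bin^{n_L}$ and $z^k \le \phi(x^k)$. The key step is to show that $x^k = \xhat$ for every $k$. Fix a coordinate $j$. If $\xhat_j = 0$, then $\sum_k \lambda_k x^k_j = 0$ with all $x^k_j \ge 0$ and all $\lambda_k > 0$, which forces $x^k_j = 0$ for every $k$. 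If $\xhat_j = 1$, the same argument applied to $1 - x^k_j \ge 0$ forces $x^k_j = 1$ for every $k$. Equivalently, $\xhat$ is a vertex of $[0,1]^{n_L}$ and cannot be a strict convex combination of other points of the cube.

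With $x^k = \xhat$ for all $k$, we have $\xhat \in \dom \Y$, and $z^k \le \phi(x^k) = \phi(\xhat)$ for every $k$. Hence
\[
\zhat = \sum_k \lambda_k z^k \le \phi(\xhat),
\]
so $(\xhat, \zhat) \in \hyp \phi$, which completes the reverse inclusion.

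The only delicate point is bookkeeping with the value $+\infty$ in the definition of $\hyp \phi$. I would handle it by noting that $\dom \Y$ excludes the leader decisions with $\phi = +\infty$, so all points in the decomposition have finite $\phi(x^k)$. If the convention admits $z = +\infty$, I would simply restrict convex combinations to the real-valued part, since that is the only part the hypograph reformulation~\eqref{model:BBP-HypReformulation} uses. No earlier result beyond the definitions is needed.
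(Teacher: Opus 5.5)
Your proof is correct and follows essentially the same argument as the paper: write the point as a convex combination of hypograph points, use that binary $\xhat$ is a vertex of $[0,1]^{n_L}$ to force every $x^k = \xhat$, and then average the inequalities $z^k \le \phi(\xhat)$. Your coordinatewise justification of the extremality step and your restriction to strictly positive weights are slightly more careful than the paper's version, which leaves both implicit.
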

\proof{Proof.}
    Let $\mathcal{S} := \hyp \phi$ and $\widebar{\mathcal{S}} := \conv(\hyp \phi) \cap (\bin^{n_L} \times \R)$. We want to prove $\mathcal{S} = \widebar{\mathcal{S}}$. We know~$\mathcal{S} \subseteq \widebar{\mathcal{S}}$, so we show~$\mathcal{S} \supseteq \widebar{\mathcal{S}}$. Let $(\xbar, \zbar) \in \widebar{\mathcal{S}}$ and consider a finite set $\X \subseteq \hyp \phi$ such that $(\xbar, \zbar) = \sum_{(x, z) \in \X} \lambda_{(x, z)} (x, z)$ for some vector $\lambda \in \R^{|\X|}_+$. As $\xbar \in \bin^{n_L}$, it is an extreme point of the box $[0, 1]^{n_L}$ and, hence, $x = \xbar$ for all $(x, z) \in \X$. This way, we obtain $\zbar = \sum_{(x, z) \in \X} \lambda_{(x, z)} z \leq \sum_{(x, z) \in \X} \lambda_{(x, z)} \phi(x) = \phi(\xbar)$. Thus, $(\xbar, \zbar) \in \hyp \phi$, concluding the proof. 
        \hfill \Halmos
\endproof

As $x \in \bin^{n_L}$ is implied by $(x, y) \in \H$, Lemma~\ref{thm:hypConvRelaxation} provides a relaxation of program \eqref{model:BBP-HypReformulation} which is tight at binary $x$ values.

\begin{corollary} \label{thm:hypConvRelaxationCorollary}
    Program
    \begin{subequations} \label{model:hypConvReformulation}
        \begin{alignat}{3}
            \min & & \hspace{.2cm} & c_L^\top x + c_F^\top y \\
            \st & & & (x, y) \in \H \\
            & & & (x, d^\top y) \in \conv( \hyp \phi ).
        \end{alignat}
    \end{subequations}
    is an exact reformulation of program~\eqref{model:BBP-HypReformulation}.
\end{corollary}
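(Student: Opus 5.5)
The plan is to show that the feasible sets of \eqref{model:hypConvReformulation} and \eqref{model:BBP-HypReformulation} coincide. The two programs have the same objective $c_L^\top x + c_F^\top y$, so equal feasible sets give equal optimal values and equal sets of optimal solutions. The argument is a direct application of Lemma~\ref{thm:hypConvRelaxation}, using the fact that membership in $\H$ forces $x$ to be binary.

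\emph{Inclusion of \eqref{model:BBP-HypReformulation} in \eqref{model:hypConvReformulation}.} Take $(x,y)$ feasible for \eqref{model:BBP-HypReformulation}. Then $(x,y) \in \H$ and $(x, d^\top y) \in \hyp\phi$. Since $\hyp\phi \subseteq \conv(\hyp\phi)$, the pair $(x,y)$ is feasible for \eqref{model:hypConvReformulation}. This direction is immediate.

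\emph{Inclusion of \eqref{model:hypConvReformulation} in \eqref{model:BBP-HypReformulation}.} Take $(x,y)$ feasible for \eqref{model:hypConvReformulation}. By the definition of $\H$, we have $x \in \bin^{n_L}$, and $d^\top y \in \R$ because $y \in \Ycal \subseteq \R^{n_F}$. Together with $(x, d^\top y) \in \conv(\hyp\phi)$, this gives
\[
(x, d^\top y) \in \conv(\hyp \phi) \cap (\bin^{n_L} \times \R).
\]
By Lemma~\ref{thm:hypConvRelaxation}, this intersection equals $\hyp\phi$. Hence $(x,y)$ satisfies both constraints of \eqref{model:BBP-HypReformulation}.

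There is no real obstacle here. The only point to check is that the domain convention in $\hyp\phi$, which restricts $x$ to $\dom\Y$ and allows values in $\R\cup\{+\infty\}$, is handled consistently on both sides. This is covered because Lemma~\ref{thm:hypConvRelaxation} is stated for exactly this set. In addition, any binary point of $\conv(\hyp\phi)$ lies in $\hyp\phi$, so its $x$-component belongs to $\dom\Y$. That membership is consistent with $\H$, since $(x,y)\in\H$ already gives $y \in \Y(x)$.
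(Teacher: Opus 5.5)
Your proof is correct and follows the paper's reasoning. The paper derives the corollary directly from Lemma~\ref{thm:hypConvRelaxation} by noting that $(x,y)\in\H$ forces $x\in\bin^{n_L}$, which is exactly your second inclusion; your first inclusion, $\hyp\phi\subseteq\conv(\hyp\phi)$, is the trivial containment the paper leaves implicit.
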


Observe that, contrary to $\hyp \phi$, the set $\conv(\hyp \phi)$ is polyhedral, as $x$ can take finitely many values. This allows to build outer approximations via linear inequalities for program~\eqref{model:hypConvReformulation}. In particular, constraints~\eqref{constr:LBC-graphReformulation-flow}-\eqref{constr:LBC-graphReformulation-w} from the network flow-based reformulation proposed by \cite{LBC24} model $(x, d^\top y) \in \conv(\hyp \phi)$ on an extended variable space, where flow variables $w$ act as convex multipliers. Moreover, if $\D_{\gph \phi}$ is replaced with an approximate value network $\D_{\gph \widehat{\phi}}$ in program \eqref{model:LBC-graphReformulation}, then via Corollary~\ref{thm:hypConvRelaxationCorollary}, we can conclude that the resulting relaxation represents the following program 
\begin{center}
$\begin{array}{ll}
    \min &  c_L^\top x + c_F^\top y \\
    \st & (x, y) \in \H \\
     & (x, d^\top y) \in \conv( \hyp \widehat{\phi} ) \supseteq \conv (\hyp \phi)
\end{array}
$
\end{center}
\medskip
but in an extended variable space. 

%

As mentioned in Section~\ref{sec:preliminaries}, larger networks are useful since they provide better bounds on the optimal value of program~\eqref{model:BBP-HypReformulation} by yielding tighter outer approximations of $\conv(\hyp \phi)$. In what follows, we propose two alternative network flow-based reformulations of program \eqref{model:hypConvReformulation} that, contrary to \eqref{model:LBC-graphReformulation}, do not require embedding $\D_{\gph \phi}$. These reformulations remain in the original variable space and dynamically refine the outer approximation of $\conv(\hyp \phi)$ via cuts, allowing for improved scalability.

\begin{remark}
    Lemma \ref{thm:hypConvRelaxation} requires $x$ to be binary; Figure \ref{fig:example1-hypPhi} serves as a counterexample for the general integer case, since $(x, z) = (2, 2)$ belongs to $\conv(\hyp \phi) \cap (\Z^{n_L} \times \R)$, but not to $\hyp \phi$.
\end{remark}

\medskip

\subsection{Optimality-based reformulation} \label{sec:optCuts}
For any vector $\xhat \in \dom\Y \cap \bin^{n_L}$, we have $\phi(\xhat) = \max\{z: (\xhat, z) \in \conv(\hyp \phi) \}$. Therefore, given $\D_{\gph \phi} = (\N, \A, l, \nu)$, we obtain
\begin{subequations} \label{model:phi-optReformulation}
    \begin{alignat}{4}
        \phi(\xhat) & = \ & \max & & \hspace{.2cm} & l^\top w \\
        & & \st & & & \sum_{a \in \delta^+(u)} w_a - \sum_{a \in \delta^-(u)} w_a = \mathbf{1}(u = r) - \mathbf{1}(u = t) & \hspace{.5cm} & u \in \N \\
        & & & & & \sum_{a \in \A_L: \idx(a) = j} \nu_a w_a = \xhat_j & & j \in [n_L] \\
        & & & & & w \geq 0.
    \end{alignat}
\end{subequations}

By strong duality, we can reformulate $\eqref{model:BBP-valueFunctionRef}$ as
$$
    \min_{x, y} \left\{ c_L^\top x + c_F^\top y:\ \begin{array}{ll}
        (x, y) \in \H, \vspace{.1cm} \\
        d^\top y \leq \underset{\pi, \lambda}{\min} \left\{ \pi_r + x^\top \lambda:\ (\pi, \lambda) \in \mathcal{P}(\D_{\gph \phi}) \right\} 
    \end{array} \right\},
$$
where
$$
    \mathcal{P}(\D_{\gph \phi}) := \left\{(\pi, \lambda) \in \R^{\left| \N \right|} \times \R^{n_L}:\ \begin{array}{ll}
        \pi_u - \pi_v + \nu_{uv}\lambda_{\idx(u, v)} \geq 0 & (u, v) \in \A_L, \vspace{.1cm} \\
        \pi_u - \pi_v \geq l_{uv} & (u, v) \in \A_F, \vspace{.1cm} \\
        \pi_t = 0
    \end{array}\right\}
$$
corresponds to the feasible solution set of the dual of program \eqref{model:phi-optReformulation}. This way, we obtain the exact reformulation
\begin{subequations} \label{model:optReformulation}
    \begin{alignat}{3}
        \min & & \hspace{.2cm} & c_L^\top x + c_F^\top y \\
        \st & & & (x, y) \in \H \\
        & & & d^\top y \leq \pi_r + \lambda^\top x & \hspace{.5cm} & (\pi, \lambda) \in \mathcal{P}( \D_{\gph \phi} ). \label{constr:optReformulation-valueFunction}
    \end{alignat}
\end{subequations}

\begin{proposition} \label{thm:optimalityReformulationEquivalence}
    Program \eqref{model:optReformulation} is an exact reformulation of \eqref{model:hypConvReformulation}.
\end{proposition}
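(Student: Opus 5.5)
Since both programs share the objective and the constraint $(x,y)\in\H$, the plan is to show that, for every $(x,y)\in\H$, the family \eqref{constr:optReformulation-valueFunction} holds exactly when $(x,d^\top y)\in\conv(\hyp\phi)$. Membership in $\H$ forces $x=\xhat\in\bin^{n_L}$, so throughout I fix such a binary $\xhat$ and set $\zhat=d^\top y$. By Lemma~\ref{thm:hypConvRelaxation} and Corollary~\ref{thm:hypConvRelaxationCorollary}, the condition $(\xhat,\zhat)\in\conv(\hyp\phi)$ is equivalent to $\zhat\le\phi(\xhat)$. The task therefore reduces to proving that
\[
\zhat\le \pi_r+\lambda^\top\xhat \quad\text{for all } (\pi,\lambda)\in\mathcal{P}(\D_{\gph\phi})
\quad\Longleftrightarrow\quad \zhat\le\phi(\xhat).
\]

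For the first case I take $\xhat\in\dom\Y$, where $\phi(\xhat)$ is finite. Here I would argue that program \eqref{model:phi-optReformulation} is feasible: the unique $r$-$t$ path encoding $(\xhat,\phi(\xhat))$, with unit flow on it, satisfies the flow and linking constraints. I would also argue that its optimal value equals $\phi(\xhat)$. Because $\xhat$ is binary, every feasible flow that is an extreme point is a single path whose $\A_L$-arcs agree with $\xhat$. This holds because the linking constraints with a binary right-hand side force all flow through layer $j$ onto arcs of value $\xhat_j$, exactly as in the extreme-point argument of Lemma~\ref{thm:hypConvRelaxation}. Since the network is layered and each path encodes a unique point of $\gph\phi$, the only such path is the one encoding $\xhat$. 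The LP is bounded, so strong duality applies: $\phi(\xhat)=\min\{\pi_r+\lambda^\top\xhat:(\pi,\lambda)\in\mathcal{P}(\D_{\gph\phi})\}$, and the dual minimum is attained. Consequently $\zhat$ is at most every $\pi_r+\lambda^\top\xhat$ if and only if it is at most the minimum, which is $\phi(\xhat)$.

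For the second case I take $\xhat\notin\dom\Y$, so that $\phi(\xhat)=+\infty$ and $(\xhat,\zhat)\in\hyp\phi$ by the paper's convention. No $r$-$t$ path encodes $\xhat$, so \eqref{model:phi-optReformulation} is infeasible. Dual feasibility of $\mathcal{P}$ then gives dual unboundedness, so $\inf\{\pi_r+\lambda^\top\xhat\}=+\infty$. To make this rigorous I would exhibit a feasible point of $\mathcal{P}$ directly: set $\lambda=0$ and let $\pi_u$ be the longest-path length from $u$ to $t$, which exists because the network is acyclic. With the infimum equal to $+\infty$, the constraints \eqref{constr:optReformulation-valueFunction} are vacuous, consistent with $\phi(\xhat)=+\infty$.

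I expect the main obstacle to be the first case, specifically verifying that the LP value equals $\phi(\xhat)$ rather than a larger value obtained by mixing paths. The extreme-point argument handles this: at each layer the flow of value $1-\xhat_j$ must be zero. One subtlety needs care, namely that layers correspond to distinct indices $\idx(a)$, so that every $r$-$t$ path crosses each index exactly once. I would invoke this structural property of $\D_{\gph\phi}$ from \cite{LBC24}.
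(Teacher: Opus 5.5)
Your argument for $\xhat\in\dom\Y$ is correct and is essentially the paper's proof. Lemma~\ref{thm:hypConvRelaxation} reduces $(\xhat,d^\top\yhat)\in\conv(\hyp\phi)$ to $d^\top\yhat\le\phi(\xhat)$, and LP duality for \eqref{model:phi-optReformulation} finishes it. The paper uses strong duality for one inclusion and weak duality for the other; you merge the two by noting that the dual minimum is attained and equals $\phi(\xhat)$. Your flow argument, that the linking constraints push all flow onto the unique path encoding $\xhat$, justifies the value identity for \eqref{model:phi-optReformulation}, which the paper only asserts.

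Your second case should be deleted, because it never occurs. If $(x,y)\in\H$, then $y\in\Ycal$ and $Dy\le b-Cx$, so $y\in\Y(x)$ and hence $x\in\dom\Y$ automatically.

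The reasoning in that case is also wrong on two counts.
\begin{enumerate}
\item By the paper's definition, $\hyp\phi$ only contains points with $x\in\dom\Y$. So $(\xhat,\zhat)\notin\hyp\phi$, and by Lemma~\ref{thm:hypConvRelaxation}, $(\xhat,\zhat)\notin\conv(\hyp\phi)$.
\item Program \eqref{model:phi-optReformulation} is a maximization. Its infeasibility, together with feasibility of $\mathcal{P}(\D_{\gph\phi})$, makes the dual minimization unbounded \emph{below}. The infimum of $\pi_r+\lambda^\top\xhat$ is therefore $-\infty$, not $+\infty$, so the cuts \eqref{constr:optReformulation-valueFunction} are violated rather than vacuous.
\end{enumerate}
The two sides of the equivalence do agree in that case, since both fail. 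But that is the opposite of what you claim, and since the case cannot arise, it is simplest to omit it.
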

\proof{Proof.}
    As both programs consider $(x, y) \in \H$, we focus on comparing $(x, d^\top y) \in \conv(\hyp \phi)$ with constraints \eqref{constr:optReformulation-valueFunction}. We first prove that any solution $(\xhat, \yhat)$ feasible to program \eqref{model:optReformulation} is also feasible to program \eqref{model:hypConvReformulation}. Equivalently, we show that if~$(\xhat, d^\top\yhat) \not\in \conv(\hyp \phi)$, then there exists a dual vector $(\pi, \lambda)$ for which $(\xhat, \yhat)$ violates \eqref{constr:optReformulation-valueFunction}. Since $(\xhat, \yhat) \in \H$, then we have $d^\top \yhat > \phi(\xhat)$. Due to strong duality, there exists a vector $(\pihat, \lambdahat) \in \argmin \{ \pi_r + \xhat^\top \lambda:\ (\pi, \lambda) \in \mathcal{P}( \D_{\gph \phi} ) \}$ such that~$\phi(\xhat) = \pihat_r + \lambdahat^\top \xhat$. Hence, we get $d^\top \yhat > \phi(\xhat) = \pihat_r + \lambdahat^\top \xhat$, so $(\xhat, \yhat)$ is not feasible to program \eqref{model:optReformulation}.
    
    We now prove that any solution $(\xhat, \yhat)$ feasible to program \eqref{model:hypConvReformulation} is also feasible to program \eqref{model:optReformulation}. By weak duality, we have $d^\top \yhat \leq \phi(\xhat) \leq \min \{ \pi_r + \xhat^\top \lambda : (\pi, \lambda) \in \mathcal{P}( \D_{\gph \phi} ) \}$. Hence, $(\xhat, \yhat)$ is feasible to program \eqref{model:optReformulation}. \hfill \Halmos
\endproof

In this work, we refer to inequalities~\eqref{constr:optReformulation-valueFunction} as \textit{optimality cuts}. Observe that optimality cuts can be dynamically separated by solving the linear program~$\min \{ \pi_r + \lambda^\top x:\ (\pi, \lambda) \in \mathcal{P}( \D_{\gph \phi} ) \}$. However, we can achieve a more efficient algorithm by exploiting $\D_{\gph \phi}$. By adding (redundant) constraints
$$
    \sum_{a \in \A_L: \idx(a) = j} (1 - \nu_a) w_a = 1 - \xhat_j \qquad \forall j \in [n_L]
$$
to program \eqref{model:phi-optReformulation} and then taking its dual, it becomes clear that the separation problem $\min\{ \pi_r + \xhat^\top \lambda: (\pi, \lambda) \in \mathcal{P}(\D_{\gph \phi}) \}$ is equivalent to
\begin{subequations} \label{model:optReformualation-separation}
    \begin{alignat}{3}
        \min_{\pi, \lambda^1, \lambda^0} & & \hspace{.2cm} & \pi_r + \sum_{j \in [n_L]} \xhat_j \lambda^1_j + (1 - \xhat_j) \lambda^0_j \\
        \st & & & \pi_u - \pi_v + \nu_{uv}\lambda^1_{\idx(u, v)} + (1 - \nu_{uv})\lambda^0_{\idx(u, v)} \geq 0 & \hspace{.5cm} & (u, v) \in \A_L \label{constr:optReformulation-separation-potential} \\
        & & & \pi_u - \pi_v \geq l_{uv} && (u, v) \in \A_F \\
        & & & \pi_t = 0.
    \end{alignat}
\end{subequations}

This equivalent program allows to fix the value of $\pi_r$ without loss of optimality and, in turn, use a modification of the approach proposed in \cite{lozano2018stochastic} to derive a dual optimal solution.

\begin{proposition}[{\cite[Proposition 3]{vasquez2025single}}] \label{thm:optReformulation-separationOptSol}
    Program \eqref{model:optReformualation-separation} admits an optimal solution $(\pi^\ast, \lambda^{1^\ast}, \lambda^{0^\ast})$ satisfying~$\pi_r^\ast = \phi(\xhat)$ and~$\xhat_j \lambda^{1^\ast}_j = (1 - \xhat_j) \lambda^{0^\ast}_j = 0$ for all~$j \in [n_L]$.
\end{proposition}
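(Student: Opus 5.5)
The plan is to write down an explicit feasible solution of program~\eqref{model:optReformualation-separation} with objective value $\phi(\xhat)$, built from longest-path potentials on the arcs of $\D_{\gph \phi}$ that agree with $\xhat$, and then to obtain optimality from weak duality. I assume, as in the derivation of~\eqref{model:phi-optReformulation}, that $\xhat \in \dom\Y \cap \bin^{n_L}$.

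\textbf{Step 1 (lower bound).} Program~\eqref{model:phi-optReformulation}, augmented with the redundant constraints $\sum_{a \in \A_L: \idx(a) = j} (1 - \nu_a) w_a = 1 - \xhat_j$, has optimal value $\phi(\xhat)$. Its dual is~\eqref{model:optReformualation-separation}. By weak duality, every feasible $(\pi,\lambda^1,\lambda^0)$ therefore has objective value at least $\phi(\xhat)$. It remains to exhibit a feasible solution with value exactly $\phi(\xhat)$ that satisfies the stated complementarity conditions.

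\textbf{Step 2 (construction).} Call an arc $a \in \A_L$ \emph{consistent} if $\nu_a = \xhat_{\idx(a)}$, and call every arc of $\A_F$ consistent. Let $L := \min_{a \in \A_F} l_a$.
\begin{itemize}
\item For each node $u$ from which $t$ is reachable by consistent arcs, set $\pi_u$ to the maximum length of a consistent $u$-$t$ path. This is well defined because the network is acyclic.
\item For every other node, set $\pi_u := L$.
\item Set $\pi_t = 0$.
\end{itemize}
Since $\xhat \in \dom\Y$, the unique $r$-$t$ path encoding $(\xhat,\phi(\xhat))$ is the only consistent $r$-$t$ path, so $\pi_r = \phi(\xhat)$.

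Now choose the multipliers as follows.
\begin{itemize}
\item If $\xhat_j = 1$, set $\lambda^1_j = 0$ and $\lambda^0_j = M$.
\item If $\xhat_j = 0$, set $\lambda^0_j = 0$ and $\lambda^1_j = M$.
\end{itemize}
Here $M := \max_{(u,v)\in\A}(\pi_v - \pi_u)^+$ is finite. With these choices $\xhat_j\lambda^1_j = (1-\xhat_j)\lambda^0_j = 0$ for all $j$, so the objective value equals $\pi_r = \phi(\xhat)$.

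\textbf{Step 3 (feasibility check).} This is the only delicate part, and I expect the nodes with no consistent path to $t$ to be the main obstacle. It is handled by the value $L$ assigned to them, as the case check below shows.
\begin{itemize}
\item \emph{Arcs $(u,t)\in\A_F$.} These are consistent, so $u$ reaches $t$ consistently and $\pi_u \geq l_{ut} = l_{ut} + \pi_t$.
\item \emph{Consistent arcs $(u,v) \in \A_L$.} The term multiplying the active $\lambda$ vanishes: if $\nu_{uv} = 1$ then $\xhat_j = 1$ and $\lambda^1_j = 0$, and symmetrically if $\nu_{uv} = 0$. So the constraint reduces to $\pi_u \geq \pi_v$.
  \begin{itemize}
  \item If $v$ reaches $t$ consistently, so does $u$, and the longest-path definition gives $\pi_u \ge \pi_v$.
  \item Otherwise $\pi_v = L$. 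Either $\pi_u = L$ as well, or $\pi_u$ is the length of some consistent path ending in an $\A_F$ arc, which is at least $L$.
  \end{itemize}
\item \emph{Inconsistent arcs $(u,v) \in \A_L$.} The active coefficient is exactly the multiplier set to $M$: for $\nu_{uv}=1$ we have $\xhat_j=0$ and $\lambda^1_j = M$, and for $\nu_{uv}=0$ we have $\xhat_j=1$ and $\lambda^0_j = M$. Hence $\pi_u - \pi_v + M \geq 0$ by the choice of $M$.
\end{itemize}

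Combining Step~3 with Step~1, the constructed solution is feasible and attains the lower bound $\phi(\xhat)$, so it is optimal with $\pi_r^\ast = \phi(\xhat)$ and the required complementarity. This proves the proposition.
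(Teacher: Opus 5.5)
Your proof is correct and takes essentially the route the paper relies on; the paper imports the result from \cite{vasquez2025single} and then describes the same kind of construction. That construction consists of potentials equal to the length of the consistent path to $t$, multipliers that vanish in the direction of $\xhat$ and are large in the other, and optimality via weak duality against the primal value $\phi(\xhat)$. Your choice $\lambda^1_j = M(1-\xhat_j)$, $\lambda^0_j = M\xhat_j$ is exactly the paper's ``natural'' no-good solution. Using $L$ instead of $0$ as the default potential is a harmless variation, and it even makes your feasibility check independent of the fact that each node has at most one consistent outgoing arc.
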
 

A natural optimal solution $(\widehat{\pi}, \widehat{\lambda}^1, \widehat{\lambda}^0)$ to program \eqref{model:optReformualation-separation} can be obtained by setting $\widehat{\pi}_r = \phi(\xhat)$, $\widehat{\pi}_t = 0$, and $\widehat{\lambda}^1_j = M (1 - \xhat_j)$, $\widehat{\lambda}^0_j = M \xhat_j$ for all $j \in [n_L]$, where $M$ is a large enough constant. The resulting inequality corresponds to the traditional ``no-good'' cut
$$
    -M \left[ \sum_{j \in [n_L]} (1 - \xhat_j) x_j + \xhat_j (1 - x_j) \right] + d^\top y \leq \phi(\xhat)
$$
which is tight at $x = \xhat$, but may be relatively weak for different values. To obtain a tighter inequality, we now describe a simple procedure that leverages $\D_{\gph \phi}$ to compute an alternative dual optimal solution satisfying the conditions from Proposition \ref{thm:optReformulation-separationOptSol}. We start by setting $\widehat{\pi}_r = \phi(\xhat)$ and $\widehat{\pi}_t = 0$. Then, for each arc $(u, t) \in \A_F$, we set $\widehat{\pi}_u = l_{ut}$, and for each arc $(u, v) \in \A_L$ such that $u \neq r$, we set $\widehat{\pi}_u = \mathbf{1}(\nu_{uv} = \xhat_{\idx(uv)})\widehat{\pi}_v$. Note that, as each node has at most two outgoing arcs with different value in $\nu$, the value assignment is consistent. Furthermore, it guarantees that if there exists a (unique) $u$-$t$ path encoding a subvector of $\xhat$, then $\widehat{\pi}_u$ is equal to its length. 
%
%
Finally, we build $\widehat{\lambda}^1$ and $\widehat{\lambda}^0$ by first setting $\widehat{\lambda}^1_j = 0$ for all $j \in [n_L]$ such that $\xhat_j = 1$, and $\widehat{\lambda}^0_j = \xhat_j$ for all $j \in [n_L]$ such that $\xhat_j = 0$. Then, for the remaining components, we set 
\begin{alignat*}{2}
    & \widehat{\lambda}^1_j = \max_{(u, v) \in \A_L: \idx(u, v) = j} \left\{ \pi_v - \pi_u \right\} & \hspace{.5cm} & \forall j \in [n_L]: \xhat_j = 0, \\
    & \widehat{\lambda}^0_j = \max_{(u, v) \in \A_L: \idx(u, v) = j} \left\{ \pi_v - \pi_u \right\} && \forall j \in [n_L]: \xhat_j = 1
\end{alignat*}
    
This way, we obtain the valid inequality $(\widehat{\lambda}^0 - \widehat{\lambda}^1)^\top x + d^\top y \leq \widehat{\pi}_r + \bm{1}^\top \widehat{\lambda}^0$, which is also tight at $x = \xhat$ due to optimality of $(\widehat{\pi}, \widehat{\lambda}^1, \widehat{\lambda}^0)$.

\medskip

\subsection{Feasibility-based reformulation}
Consider a point $(x, z) \in \bin^{n_L} \times \R$. Given a network $\D_{\gph \phi} = (\N, \A, l, \nu)$, the program
\begin{subequations} \label{model:feasibilityProgram}
    \begin{alignat}{3}
        \min_w & & \hspace{.2cm} & 0 \\
        \st & & & \sum_{a \in \delta^+(u)} w_a - \sum_{a \in \delta^-(u)} w_a = \mathbf{1}(u = r) - \mathbf{1}(u = t) & \hspace{.5cm} & u \in \N \\
        & & & \sum_{a \in \A_L: \idx(a) = j} \nu_a w_a = x_j & & j \in [n_L] \label{constr:feasibilityProgram-xValue} \\
        & & & z \leq \sum_{a \in \A_F} l_a w_a \label{constr:feasibilityProgram-zValue} \\
        & & & w \geq 0
    \end{alignat}
\end{subequations}
is feasible if and only if $(x, z) \in \conv(\hyp \phi)$. Hence, for any point $(\xhat, \zhat) \not\in \conv(\hyp \phi)$, the dual of program \eqref{model:feasibilityProgram}, corresponding to
\begin{subequations} \label{model:feasibilityProgramDual}
    \begin{alignat}{3}
        \max_{\pi, \lambda, \gamma} & & \hspace{.2cm} & \pi_r + \xhat^\top \lambda + \zhat \gamma \\
        \st &&& \pi_u - \pi_v + \nu_{uv} \lambda_{\idx(u, v)} \leq 0 & \hspace{.5cm} & (u, v) \in \A_L \\
        &&& \pi_u - \pi_v + l_{uv}\gamma \leq 0 & & (u, v) \in \A_F \\
        &&& \pi_t = 0 \\
        &&& \gamma \geq 0
    \end{alignat}
\end{subequations}
is unbounded (the feasible region contains the origin). This way, given a feasible dual solution~$(\widehat{\pi}, \widehat{\lambda}, \widehat{\gamma})$, the inequality~$\widehat{\pi}_r + \widehat{\lambda}^\top x + \widehat{\gamma} z \leq 0$ is valid for~$\conv(\hyp \phi)$ and violated by~$(\xhat, \zhat)$. This leads to the exact reformulation of program \eqref{model:BBP-valueFunctionRef}
\begin{subequations} \label{model:feasReformulation}
    \begin{alignat}{3}
        \min & & \hspace{.2cm} & c_L^\top x + c_F^\top y \\
        \st & & & (x, y) \in \H \\
        & & & \pi_r + \lambda^\top x + \gamma d^\top y \leq 0 & \hspace{.5cm} & (\pi, \lambda, \gamma) \in \mathcal{Q}( \D_{\gph \phi} ). \label{constr:feasReformulation-valueFunction}
    \end{alignat}
\end{subequations}
where
$$
    \mathcal{Q}(\D_{\gph \phi}) = \left\{(\pi, \lambda, \gamma) \in \R^{\left| \N \right|} \times \R^{n_L} \times \R_+:\ \begin{array}{ll}
        \pi_u - \pi_v + \nu_{uv}\lambda_{\idx(u, v)} \leq 0 & (u, v) \in \A_L, \\
        \pi_u - \pi_v + l_{uv}\gamma \leq 0 & (u, v) \in \A_F, \\
        \pi_t = 0
    \end{array}\right\}
$$
is the cone defining the feasible region of the dual of program \eqref{model:feasibilityProgram}.

\begin{proposition} \label{thm:feasibilityReformulationEquivalence}
    Program \eqref{model:feasReformulation} is an exact reformulation of \eqref{model:hypConvReformulation}.
\end{proposition}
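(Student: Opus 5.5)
The proof follows the same two-direction scheme as Proposition~\ref{thm:optimalityReformulationEquivalence}. Since both programs impose $(x, y) \in \H$, it suffices to show that, for any $(\xhat, \yhat) \in \H$, the point $(\xhat, d^\top \yhat)$ belongs to $\conv(\hyp \phi)$ if and only if it satisfies every inequality in \eqref{constr:feasReformulation-valueFunction}. Throughout we write $\zhat := d^\top \yhat$ and use that $\xhat \in \bin^{n_L}$. The only tool needed is the claim stated above: for binary $x$, program \eqref{model:feasibilityProgram} is feasible if and only if $(x, z) \in \conv(\hyp \phi)$. This claim follows from the path-encoding property of $\D_{\gph \phi}$ together with Lemma~\ref{thm:hypConvRelaxation}.

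\textbf{Feasible for \eqref{model:hypConvReformulation} implies feasible for \eqref{model:feasReformulation}.} Suppose $(\xhat, \zhat) \in \conv(\hyp \phi)$. Then \eqref{model:feasibilityProgram} has a feasible flow $\widehat{w} \geq 0$. Fix any $(\pi, \lambda, \gamma) \in \mathcal{Q}(\D_{\gph \phi})$, multiply each arc constraint defining $\mathcal{Q}$ by $\widehat{w}_a \geq 0$, and sum over all arcs.

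The $\pi$ terms telescope through flow conservation and give $\pi_r - \pi_t = \pi_r$. The $\lambda$ terms give $\sum_j \lambda_j \sum_{a: \idx(a)=j} \nu_a \widehat{w}_a = \lambda^\top \xhat$. The $\gamma$ terms give $\gamma \, l^\top \widehat{w}$, since only arcs in $\A_F$ carry length. Hence
\[
\pi_r + \lambda^\top \xhat + \gamma \, l^\top \widehat{w} \leq 0.
\]
Because $\gamma \geq 0$ and $\zhat \leq l^\top \widehat{w}$ by \eqref{constr:feasibilityProgram-zValue}, we get $\pi_r + \lambda^\top \xhat + \gamma \zhat \leq 0$. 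This is weak duality for a zero-objective primal.

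\textbf{Feasible for \eqref{model:feasReformulation} implies feasible for \eqref{model:hypConvReformulation}.} We argue by contraposition. Suppose $(\xhat, \zhat) \notin \conv(\hyp \phi)$. Then \eqref{model:feasibilityProgram} is infeasible. Its dual \eqref{model:feasibilityProgramDual} is feasible, since the origin is feasible. By LP duality (equivalently, Farkas' lemma) the dual is therefore unbounded, so it has a feasible point $(\widehat{\pi}, \widehat{\lambda}, \widehat{\gamma})$ with
\[
\widehat{\pi}_r + \widehat{\lambda}^\top \xhat + \widehat{\gamma} \zhat > 0.
\]
Feasibility in \eqref{model:feasibilityProgramDual} means exactly $(\widehat{\pi}, \widehat{\lambda}, \widehat{\gamma}) \in \mathcal{Q}(\D_{\gph \phi})$. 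Hence $(\xhat, \yhat)$ violates the corresponding inequality in \eqref{constr:feasReformulation-valueFunction} and is infeasible for \eqref{model:feasReformulation}.

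\textbf{Main obstacle.} The delicate step is justifying the claim that \eqref{model:feasibilityProgram} is feasible exactly when $(\xhat, \zhat) \in \conv(\hyp \phi)$; I would prove it explicitly rather than assume it.

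For the "if" direction, take $\zhat \leq \phi(\xhat)$, which holds by Lemma~\ref{thm:hypConvRelaxation} since $\xhat$ is binary and the point lies in $\conv(\hyp\phi)$. Send one unit of flow along the unique $r$-$t$ path encoding $(\xhat, \phi(\xhat))$. This flow satisfies \eqref{constr:feasibilityProgram-xValue}, and its length is $\phi(\xhat) \geq \zhat$.

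For the "only if" direction, decompose a feasible flow into $r$-$t$ paths with convex weights; this is possible because the network is acyclic. Each path encodes a point of $\gph \phi$. Constraint \eqref{constr:feasibilityProgram-xValue}, combined with the binarity of $\xhat$ as in the proof of Lemma~\ref{thm:hypConvRelaxation}, forces every path with positive weight to encode $\xhat$ itself. Consequently $l^\top w = \phi(\xhat) \geq \zhat$, so $(\xhat, \zhat) \in \hyp \phi \subseteq \conv(\hyp \phi)$.
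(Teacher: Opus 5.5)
Your proof is correct, and for the direction ``feasible for \eqref{model:feasReformulation} implies feasible for \eqref{model:hypConvReformulation}'' it is exactly the paper's argument: infeasibility of \eqref{model:feasibilityProgram} together with a feasible dual forces dual unboundedness, which yields a violated cut.

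For the converse you take a genuinely different route. The paper does not use a primal flow. It observes that $\proj_{\pi,\lambda}(\mathcal{Q}(\D_{\gph\phi})\cap\{\gamma=t\})$ equals $-t\,\mathcal{P}(\D_{\gph\phi})$ for $t>0$ and $-\rec(\mathcal{P}(\D_{\gph\phi}))$ for $t=0$, and then splits into cases. For $\gammahat>0$ it applies weak duality of the optimality program \eqref{model:phi-optReformulation} to get $-\pihat_r-\lambdahat^\top\xhat\ge\gammahat\phi(\xhat)\ge\gammahat d^\top\yhat$. For $\gammahat=0$ it uses that finiteness of $\phi(\xhat)$ excludes improving recession directions of $\mathcal{P}(\D_{\gph\phi})$.

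You instead multiply the constraints of $\mathcal{Q}(\D_{\gph\phi})$ by a feasible flow and telescope, which is the direct weak-duality argument for the pair \eqref{model:feasibilityProgram}/\eqref{model:feasibilityProgramDual}. This treats $\gamma>0$ and $\gamma=0$ uniformly and never touches $\mathcal{P}(\D_{\gph\phi})$ or its recession cone. In exchange, your proof relies in both directions on the characterization ``\eqref{model:feasibilityProgram} is feasible iff $(x,z)\in\conv(\hyp\phi)$,'' which the paper simply asserts. Your explicit proof of it makes the argument self-contained: a single encoding path for one direction, and for the other a path decomposition combined with binarity of $\xhat$ and the fact that every $r$-$t$ path encodes a point of $\gph\phi$.

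What the paper's detour buys is the structural link it exploits immediately afterwards. Feasibility cuts with $\gamma>0$ are rescaled optimality cuts, and those with $\gamma=0$ are negated recession directions of $\mathcal{P}(\D_{\gph\phi})$.
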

\proof{Proof.}
    As both programs consider $(x, y) \in \H$, we focus on comparing $(x, d^\top y) \in \conv(\hyp \phi)$ with constraints \eqref{constr:feasReformulation-valueFunction}. We first prove that any solution $(\xhat, \yhat)$ feasible to program \eqref{model:feasReformulation} is also feasible to program \eqref{model:hypConvReformulation}. Equivalently, if $(\xhat, d^\top \yhat) \not\in \conv(\hyp \phi)$, then there exists a ray~$(\pihat, \lambdahat, \gammahat) \in \mathcal{Q}( \D_{\gph \phi} )$ such that~$\pihat_r + \lambdahat^\top \xhat + \gammahat d^\top \yhat > 0$. Hence,~$(\xhat, \yhat)$ is not feasible to program \eqref{model:feasReformulation}.
    
    We now prove that any solution $(\xhat, \yhat)$ feasible to program \eqref{model:hypConvReformulation} is also feasible to program \eqref{model:feasReformulation}. 
    First, observe that
    \vspace{-.2cm}
    \[
        \proj_{\pi, \lambda} (\mathcal{Q}(\D_{\gph \phi}) \cap \{\gamma = t\}) = \begin{cases}
            -t \mathcal{P}(\D_{\gph \phi}) & t > 0, \\
            -\rec(\mathcal{P}(\D_{\gph \phi})) & t = 0,
        \end{cases}
    \]
    where $\rec(\mathcal{P}(\D_{\gph \phi}))$ denotes the recession cone of $\mathcal{P}(\D_{\gph \phi})$. For $t > 0$, we have that any ray $(\pihat, \lambdahat, \gammahat) \in \mathcal{Q}(\D_{\gph \phi})$ yields~$(-\pihat, -\lambdahat) \in \gammahat \mathcal{P}(\D_{\gph \phi})$. By weak duality, we have~$\min \{ \pi_r + \lambda^\top x: (\pi, \lambda) \in \gammahat\mathcal{P}( \D_{\gph \phi} ) \} \geq \gammahat \phi(\xhat)$. Hence, we obtain that~$-\pihat_r - \lambdahat^\top \xhat \geq \gammahat\phi(\xhat) \geq \gammahat d^\top \yhat$ holds for any ray $(\pihat, \lambdahat, \gammahat) \in \mathcal{Q}(\D_{\gph \phi})$ such that $\gammahat > 0$. For $t = 0$, since $\phi(\xhat)$ is finite, every recession direction $(\pi, \lambda)$ satisfies $\pi_r + \lambda^\top \xhat \geq 0$. Therefore, every ray $(\pihat, \lambdahat, 0) \in \mathcal{Q}(\D_{\gph \phi})$ satisfies $\pihat_r + \lambdahat^\top \xhat \leq 0$. Thus, $(\xhat, \yhat)$ is feasible to program \eqref{model:feasReformulation}. \hfill \Halmos
\endproof


We refer to inequalities~\eqref{constr:feasReformulation-valueFunction} as \textit{feasibility cuts}. The proof of Proposition \ref{thm:feasibilityReformulationEquivalence} also shows how feasibility cuts generalize optimality cuts. In particular, every feasibility cut with $\gamma > 0$ can be normalized to obtain an optimality cut, whereas feasibility cuts with $\gamma = 0$ are induced by negatives of recession directions of $P(\D_{\gph \phi})$. Observe that feasibility cuts can be separated by directly solving the dual of program~\eqref{model:feasibilityProgram}. Alternatively, we can use a projected subgradient approach, as proposed in \cite{davarnia2021outer}, to solve a Lagrangian dual of program~\eqref{model:feasibilityProgram}. Given $(\xhat, \zhat) \not\in \conv(\hyp \phi)$, $\mu^L \in \R^{n_L}$, and $\mu^F \geq 0$, define the Lagrangian function
\begin{alignat*}{3}
    \mathcal{L}(\mu^L, \mu^F) =  \min_{w \geq 0} & & \hspace{.2cm} & \sum_{j \in [n_L]} \mu_j^L \left( \xhat_j - \sum_{a \in \A^1_L: \idx(a) = j} w_a \right) + \mu^F \left( \zhat - \sum_{a \in \A_F} l_a w_a \right) \\
    \st & & & \sum_{a \in \delta^+(u)} w_a - \sum_{a \in \delta^-(u)} w_a = \mathbf{1}(u = r) - \mathbf{1}(u = t) & \hspace{.5cm} & u \in \N
\end{alignat*}
obtained by relaxing \eqref{constr:feasibilityProgram-xValue}-\eqref{constr:feasibilityProgram-zValue} from program \eqref{model:feasibilityProgram}. A supergradient of $\mathcal{L}$ is $\partial \mathcal{L}(\mu^L, \mu^F) = (\xhat, \zhat) - p$, where $p$ is a longest path on $\D_{\gph \phi}$ after redefining the arc length vector $l$ using the function 
$$
    l_a(\mu^L, \mu^F) = \begin{cases}
        \nu_a \mu^L_{\idx(a)} & \text{if } a \in \A_L, \\
        l_a\mu^F & \text{if } a \in \A_F,
    \end{cases}
$$
for all arc $a \in \A$.
Then, we can solve the Lagrangian dual $\max\{ \mathcal{L}(\mu^L, \mu^F) : \mu^F \geq 0 \}$ using a projected subgradient method. Alternatively, due to the definition of the hypograph, we can set $\mu^F = 1$ and use a subgradient method. Note that, due to weak duality, any dual vector $(\mu^L, \mu^F)$ yielding $\mathcal{L}(\mu^L, \mu^F) > 0$ is sufficient to build a feasibility cut violated by $(\xhat, \yhat)$.

Before finishing this Section, we make a brief comment about the dimension of the faces induced by valid inequalities.

\begin{remark}
    Let $\widetilde{\F} := \{ (x, y): x \in \dom \Y,\ y \in R(x)\}\) denote the set obtained from $\F$ after relaxing the leader constraints. If an inequality~$\alpha^\top x + \gamma z \leq \beta$, with $\gamma > 0$, induces a face~$F \subseteq \conv(\hyp \phi)$ such that~$\dim F = k$, then~$\alpha^\top x + \gamma d^\top y \leq \beta$ induces a face~$\widetilde{F} \subseteq \conv \widetilde{\F}$ such that~$\dim \widetilde{F} \geq k$. Indeed, since $\gamma > 0$, the points in $F$ belong to~$\conv(\gph \phi)$. Therefore, each of $k + 1$ affinely independent points in $F$ can be lifted to a point in $\widetilde{F}$. These lifted points remain affinely independent because their images under the map~$(x, y) \mapsto (x, d^\top y)$ are affinely independent. Note that this dimension bound does not necessarily extend to $\conv \F$, since the leader constraints may exclude some of the lifted points.
\end{remark}

In what follows, we show how to strengthen structured valid inequalities via tilting by leveraging an arc length-parametrized version of $\D_{\gph \phi}$.

\section{Strengthening cuts via tilting} \label{sec:cut-strengthening}
We next introduce a tilting procedure for structured valid inequalities inspired by the approaches in \cite{behle2007binary} and \cite{castro2022lifting}. This procedure computes \textit{tilting quantities}, corresponding to the maximum component-wise increase of the left-hand side that results in a strengthened valid inequality. We further explain how to exploit the combinatorial structure of $\D_{\gph \phi}$ to efficiently compute them.

\subsection{Coefficient Tilting} 
We focus on valid inequalities of the form 
\begin{equation} \label{eq:general-cut}
    \alpha^\top x + \gamma z \leq \beta,
\end{equation}
where $\alpha \in \R^{n_L}$, and $\gamma, \beta \in \R$. Moreover, let
$$
    F(\alpha, \gamma, \beta) = \left\{(x, z) \in \conv(\hyp \phi) : \alpha^\top \xbar + \gamma \zbar = \beta \right\}
$$ 
be the face of $\conv(\hyp \phi)$ induced by the cut defined by $(\alpha, \gamma, \beta)$. We want to know how much we can increase the coefficients of $\alpha$ while keeping the resulting inequality valid for $\conv(\hyp \phi)$. To do so, for each $j \in [n_L]$ and $\theta \in \bin$, we define the \textit{tilting quantity function} $t^\theta_j: \R^{n_L} \times \R \times \R \mapsto \R_+ \cup \{+ \infty\}$ as
\begin{equation} \label{eq:tilting-function}
    t_j^\theta(\alpha, \gamma, \beta) = \beta - \theta \alpha_j - \max_{x, z} \left\{ \gamma z + \sum_{i \in [n_L] \setminus \{j\}} \alpha_i x_i : (x, z) \in \conv(\hyp \phi),\ x_j = \theta \right\}.
\end{equation}

Functions $t^\theta_j$ compute the maximum increment $\alpha_j$ can achieve while keeping the tilted inequality
\begin{equation} \label{eq:tilted-cut}
    \left[ \alpha + \left( t^1_j(\alpha, \gamma, \beta) - t^0_j(\alpha, \gamma, \beta) \right) e^j \right]^\top x + \gamma z \leq \beta - t^0_j(\alpha, \gamma, \beta)
\end{equation}
valid for $\conv(\hyp \phi)$, where $e^j$ is the $j$-th canonical vector. Note that if~$t_j^\theta(\alpha, \gamma, \beta) = +\infty$, then $x_j = 1 - \theta$ throughout $\conv(\hyp \phi)$, and $x_j$ can be fixed. Hence, in what follows, we consider only components $j \in [n_L]$ for which both tilting quantities $t_j^0$ and $t_j^1$ are finite.

\begin{proposition} \label{thm:tilted-cut}
    Let $(\alpha, \gamma, \beta)$ define a valid inequality of the form \eqref{eq:general-cut}. Then, the tilted inequality \eqref{eq:tilted-cut} is valid for $\conv(\hyp \phi)$.
\end{proposition}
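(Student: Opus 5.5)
Since the tilted inequality \eqref{eq:tilted-cut} is linear in $(x,z)$, it suffices to verify it on a set whose convex hull is $\conv(\hyp \phi)$, namely on $\hyp \phi$ itself. By the definition of $\hyp\phi$, every such point has $x \in \bin^{n_L}$, so $x_j \in \{0,1\}$. I will split on $x_j = \theta$ for $\theta \in \bin$. Abbreviate $t^\theta := t^\theta_j(\alpha,\gamma,\beta)$; recall that we only consider $j$ with both $t^0$ and $t^1$ finite.

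First I expand the left-hand side of \eqref{eq:tilted-cut} at a point $(x,z)\in\hyp\phi$ with $x_j=\theta$:
\[
\sum_{i \neq j} \alpha_i x_i + \theta\alpha_j + \theta\,(t^1 - t^0) + \gamma z .
\]
The point $(x,z)$ is feasible for the maximization in \eqref{eq:tilting-function}, since $\hyp\phi \subseteq \conv(\hyp\phi)$. Hence
\[
\gamma z + \sum_{i\neq j}\alpha_i x_i \le \beta - \theta\alpha_j - t^\theta .
\]
Substituting this bound, the left-hand side is at most $\beta - t^\theta + \theta(t^1 - t^0)$. For $\theta=0$ this equals $\beta - t^0$, and for $\theta = 1$ it equals $\beta - t^1 + t^1 - t^0 = \beta - t^0$. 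In both cases the tilted inequality holds at every point of $\hyp\phi$. Taking convex combinations extends it to $\conv(\hyp\phi)$.

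The main subtlety is to make sure the reduction to extreme or generating points is legitimate. This requires two things: the maximum in \eqref{eq:tilting-function} must be well defined, and the inequality must be checked on points with $x_j \in \{0,1\}$ only. Lemma~\ref{thm:hypConvRelaxation} guarantees that every point of $\hyp\phi$ has binary $x$, and hence that fixing $x_j=\theta$ covers all generators. Finiteness of the maximum follows from finiteness of $t^\theta$, which we assumed. Alternatively, I could argue directly: if $\gamma<0$ the maximum might be taken over unbounded $z\to-\infty$, but it is still bounded above because the original cut is valid.

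Finally, I would note that $t^\theta \ge 0$ by validity of \eqref{eq:general-cut}. This is not needed for validity itself, but it shows that the tilted cut is at least as strong as the original one.
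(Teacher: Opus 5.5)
Your proof is correct and is essentially the paper's argument: fix $x_j=\theta\in\{0,1\}$, bound $\gamma z+\sum_{i\neq j}\alpha_i x_i$ by the maximum in \eqref{eq:tilting-function}, and observe that the right-hand side collapses to $\beta - t^0_j(\alpha,\gamma,\beta)$ in both cases. Your explicit reduction to the generating set $\hyp\phi$ (all of whose points have binary $x$), followed by taking convex combinations, is slightly more careful than the paper's write-up, which starts from an arbitrary point of $\conv(\hyp\phi)$ but only checks the cases $\xhat_j=0$ and $\xhat_j=1$.
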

\proof{Proof.}
    Consider any vector $(\xhat, \zhat) \in \conv(\hyp \phi)$. By definition of $t_j^0$, we have
\begin{center}
$\begin{array}{ll}
      t^0_j(\alpha, \gamma, \beta) & = \beta - \max_{x, z} \left\{ \gamma z + \sum_{i \in [n_L] \setminus \{j\}} \alpha_i x_i : (x, z) \in \conv(\hyp \phi),\ x_j = 0 \right\} \\
        & \leq \beta - \sum_{i \in [n] \setminus \{j\}} \alpha_i \xhat_i - \gamma \zhat.
    \end{array}$
\end{center}
Therefore, if $\xhat_j = 0$, then $(\xhat, \zhat)$ satisfies \eqref{eq:tilted-cut}. Similarly, by definition of $t_j^1$, we have
    \begin{center}$
    \begin{array}{ll}
        t^1_j(\alpha, \gamma, \beta) & = \beta - \alpha_j - \max_{x, z} \left\{ \gamma z + \sum_{i \in [n_L] \setminus \{j\}} \alpha_i x_i : (x, z) \in \conv(\hyp \phi),\ x_j = 1 \right\} \\
        & \leq \beta - \alpha_j - \sum_{i \in [n] \setminus \{j\}} \alpha_i \xhat_i - \gamma \zhat.
    \end{array}$
    \end{center}
    By subtracting $t_j^0(\alpha, \gamma, \beta)$ on both sides of the inequality, we obtain that $(\xhat, \zhat)$ satisfies \eqref{eq:tilted-cut}. We conclude that \eqref{eq:tilted-cut} is valid for~$\conv(\hyp \phi)$. \hfill \Halmos
\endproof

We now use Theorem 2 in \cite{castro2022lifting} to show that tilted inequalities induce higher dimensional faces.

\begin{proposition} \label{thm:tilted-cut-dimension}
    Let $(\alpha, \gamma, \beta)$ define a valid inequality of the form \eqref{eq:general-cut}. Assume there exists $j \in [n_L]$ such that $t^\theta_j(\alpha, \gamma, \beta) > 0$ for some $\theta \in \bin$, and let $\widehat{\alpha} = \alpha + (t^1_j(\alpha, \gamma, \beta) - t^0_j(\alpha, \gamma, \beta)) e^j$, $\widehat{\beta} = \beta - t^0_j(\alpha, \gamma, \beta)$. Then, $\dim F(\widehat{\alpha}, \gamma, \widehat{\beta}) \geq \dim F(\alpha, \gamma, \beta) + 1$.
\end{proposition}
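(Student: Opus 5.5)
Write $t^\theta := t^\theta_j(\alpha,\gamma,\beta)$, $F := F(\alpha,\gamma,\beta)$ and $\widehat F := F(\widehat\alpha,\gamma,\widehat\beta)$. The plan is a direct face-dimension argument in the spirit of Theorem 2 of \cite{castro2022lifting}. It has three steps: show that $F \subseteq \widehat F$, show that $\widehat F$ contains a point outside $\operatorname{aff} F$, and then count dimensions.

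\textbf{Step 1: identifying a point outside $F$.} Since $(\alpha,\gamma,\beta)$ is valid for $\conv(\hyp\phi)$, we have $t^0, t^1 \geq 0$. We may assume both are finite, as the paper does. Fix a $\theta$ with $t^\theta > 0$. The maximum in \eqref{eq:tilting-function} is attained, because $\conv(\hyp\phi)\cap\{x_j=\theta\}$ is a nonempty polyhedron and the maximum is finite. Let $(x^\ast, z^\ast)$ be a maximizer, so $x^\ast_j = \theta$. Then
\[
\widehat\alpha^\top x^\ast + \gamma z^\ast = \widehat\beta,
\]
which follows from a direct substitution using the definition of $t^\theta$. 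Hence $(x^\ast,z^\ast) \in \widehat F$. On the other hand, $\alpha^\top x^\ast + \gamma z^\ast = \beta - t^\theta < \beta$, so $(x^\ast, z^\ast) \notin F$.

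\textbf{Step 2: showing $F \subseteq \widehat F$.} I expect this to be the main obstacle. For $(\bar x,\bar z) \in F$ we have
\[
\widehat\alpha^\top \bar x + \gamma\bar z - \widehat\beta = t^1 \bar x_j + t^0(1-\bar x_j).
\]
This vanishes only if $t^1\bar x_j = 0$ and $t^0(1-\bar x_j) = 0$. If $\bar x_j = \theta'$ is binary, then $\beta$ is attained on the slice $x_j = \theta'$, which forces $t^{\theta'} = 0$, and the containment holds.

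For fractional $\bar x_j$, this requires $t^0 = t^1 = 0$, which contradicts the hypothesis. So I would first reduce to the case in which $F$ is generated by points with $x_j$ binary. Since $x$ takes values in $\bin^{n_L}$, every point of $F$ is a convex combination of points of $F \cap (\bin^{n_L}\times\R)$ plus rays. It is then enough that every such generating point $p$ of $F$ lies in $\widehat F$. If $p$ has $x_j = \theta'$, then $p$ attains $\beta$ on the slice $x_j = \theta'$, so $t^{\theta'} = 0$. If the generators of $F$ used both values of $x_j$, we would get $t^0 = t^1 = 0$, contradicting the hypothesis. Therefore all generators share a single value $\theta'$ with $t^{\theta'} = 0$, and $\theta' \neq \theta$. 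Each generator then satisfies the tilted equation, so $F \subseteq \widehat F$ because $\widehat F$ is convex. The rays of $\conv(\hyp\phi)$ are $(0,-1)$; they must be handled separately, and they leave the left-hand side unchanged in both inequalities.

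\textbf{Step 3: counting dimensions.} By Step 2, $\operatorname{aff} F \subseteq \operatorname{aff}\widehat F$. By Step 1, $(x^\ast,z^\ast) \in \widehat F$. Moreover, $(x^\ast,z^\ast) \notin \operatorname{aff} F$, because every point of $\operatorname{aff} F$ satisfies $\alpha^\top x + \gamma z = \beta$, while $(x^\ast,z^\ast)$ does not. Adding $(x^\ast,z^\ast)$ to an affinely independent set of $\dim F + 1$ points of $F$ therefore keeps the set affinely independent. This gives $\dim \widehat F \geq \dim F + 1$.

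If $F$ is empty, the same argument gives $\dim \widehat F \geq 0 = -1 + 1$.
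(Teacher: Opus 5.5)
Your proof is correct and follows the same route as the paper's: the maximizer on the slice $x_j=\theta$ with $t^\theta_j>0$ becomes tight for the tilted cut but violates the equation $\alpha^\top x+\gamma z=\beta$, which holds on the affine hull of $F$, while $F$ itself (either empty or confined to the slice where the tilting quantity vanishes) stays inside $\widehat F$. Compared with the paper, you also justify the two points it leaves implicit: that every point of $F$, including those with fractional $x_j$, lies in that slice, and that $F\subseteq\widehat F$. The latter also follows in one line from Proposition~\ref{thm:tilted-cut}: for $(\bar x,\bar z)\in F$ one has $\widehat\alpha^\top\bar x+\gamma\bar z-\widehat\beta=t^1_j\bar x_j+t^0_j(1-\bar x_j)\ge 0$, and validity of the tilted cut forces this to be $\le 0$.
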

\proof{Proof.}
    We distinguish two cases: both tilting functions evaluate to a positive value, or only one of them does. If $t_j^0(\alpha, \gamma, \beta), t_j^1(\alpha, \gamma, \beta) > 0$, then there is no point $(x, z) \in F(\alpha, \gamma, \beta)$ such that $x_j = 0$ or $x_j = 1$, \ie, the inequality is not tight at any point in $\conv(\hyp \phi)$; otherwise, one of the two tilting functions would evaluate to zero. Therefore, after tilting, we obtain a new vector $(\widehat{\alpha}, \gamma, \widehat{\beta})$ such that $F(\widehat{\alpha}, \gamma, \widehat{\beta})$ contains a point $(x, z)$ with $x_j = 1$ (and another with $x_j = 0$). Hence, we only focus on the second case and assume $|F(\alpha, \gamma, \beta)| \geq 1$.
    
    If $t_j^\theta(\alpha, \gamma, \beta) = 0$ for $\theta = 0$ or $\theta = 1$, then any point $(x, z) \in F(\alpha, \gamma, \beta)$ satisfies $x_j = \theta$. By definition of $t_j^\theta$, after tilting, we obtain a new vector $(\widehat{\alpha}, \gamma, \widehat{\beta})$ such that $F(\widehat{\alpha}, \gamma, \widehat{\beta})$ contains a point $(x, z)$ satisfying $x_j = 1 - \theta$ which is affinely independent from all points in $F(\alpha, \gamma, \beta)$. Hence, $\dim F(\widehat{\alpha}, \gamma, \widehat{\beta}) \geq \dim F(\alpha, \gamma, \beta) + 1$. \hfill \Halmos
\endproof

As noted in \cite{castro2022lifting}, if $\conv(\hyp \phi)$ is full-dimensional, Proposition~\ref{thm:tilted-cut-dimension} indicates that whenever we tilt $n_L - \dim F(\alpha, \gamma, \beta)$ components of a valid inequality defined by $(\alpha, \gamma, \beta)$, the resulting inequality is facet-defining for $\conv(\hyp \phi)$. Nonetheless, it is possible to achieve facet-defining inequalities even if we tilt fewer components, \eg, if $\conv(\hyp \phi)$ is not full-dimensional or if tilting one component leads to an inequality inducing a face $F'$ satisfying $\dim F' > \dim F(\alpha, \gamma, \beta) + 1$.

\subsection{Computing tilting quantities with a network} \label{sec:tiltingFunctionsWithDDs}

In general, computing tilting quantity functions for binary linear programs poses a significant challenge \citep{castro2022lifting}. In what follows, inspired by the approach in \cite{castro2022lifting}, we explain how to leverage $\D_{\gph \phi}$ to compute tilting quantities.
Assume we have a network $\D_{\gph \phi} = (\N, \A, l, \nu)$ and a vector $(\alpha, \gamma, \beta)$ defining an inequality of the form \eqref{eq:general-cut}. Let $\ell(\alpha, \gamma): \A \rightarrow \R^{|\A|}$ be a function that maps each arc $a \in \A$ to a length in the following way:
 $$
    \ell_a(\alpha, \gamma) = \begin{cases}
        \nu_a\alpha_{\idx(a)} & \text{if } a \in \A_L, \\
        \nu_a \gamma & \text{if } a \in \A_F.
    \end{cases}
$$

Let $\D_{\gph \phi}(\alpha, \gamma) = (\N, \A, \ell(\alpha, \gamma), \nu)$ be an arc length-parametrized version of $\D_{\gph \phi}$, \ie, it inherits the node and arc sets, as well as the arc values, but redefines the arc lengths. Observe that each $r$-$t$ path in $\D_{\gph \phi}(\alpha, \gamma)$ still uniquely maps to a vector $(x, z) \in \gph \phi$, and its length equals $\alpha^\top x + \gamma z$. Now, for any node $u \in \N \setminus \{ r, t \}$, let $\sigma_u^{\downarrow}$ denote the longest path length from the root node $r$ to $u$, and $\sigma_u^{\uparrow}$ the longest path length from $u$ to the sink node $t$, both in $\D_{\gph \phi}(\alpha, \gamma)$. Moreover, for any arc $a \in \A_L$, let
$$
    \mu_a := \max_{w \geq 0} \left\{ \sum_{a' \in \A} \ell_{a'}(\alpha, \gamma)\, w_{a'} : w \text{ represents an $r$-$t$ path in $\D_{\gph \phi}(\alpha, \gamma)$},\ w_a = 1 \right\}
$$
be the maximum length of any $r$-$t$ path in $\D_{\gph \phi}(\alpha, \gamma)$ traversing arc $a$. For any $j \in [n_L]$ and $\theta \in \bin$, we have
\begin{center}$
    t^\theta_j(\alpha, \gamma, \beta) = \beta - \max\{\mu_a : a \in \A_L,\ \idx(a) = j,\ \nu_a = \theta\}.$
\end{center} 
\medskip

By definition, we have $\mu_{uv} = \sigma_u^{\downarrow} + \sigma_v^{\uparrow} + \ell_{uv}(\alpha, \gamma)$ for any arc $(u, v) \in \A$. Hence, we obtain 
\medskip
\begin{center}$
\begin{array}{ll}
    t^\theta_j(\alpha, \gamma, \beta) & = \beta - \max\{\sigma_u^{\downarrow} + \sigma_v^{\uparrow} + \ell_{uv}(\alpha, \gamma): (u, v) \in \A_L,\ \idx(u, v) = j,\ \nu_{uv} = \theta\} \\
    & = \beta - \theta \alpha_j - \max\{\sigma_u^{\downarrow} + \sigma_v^{\uparrow}: (u, v) \in \A_L,\ \idx(u, v) = j,\ \nu_{uv} = \theta\}
\end{array}$
\end{center}
\medskip
for $\theta \in \bin$. Observe that computing $\sigma_u^{\downarrow}$ or $\sigma_v^{\uparrow}$ for a fixed vector $(\alpha, \gamma)$ corresponds to solving a longest path problem over $\D_{\gph \phi}$. Therefore, the time complexity of computing $\mu_a$ for any arc $a \in \A_L$ is linear in the size of $\D_{\gph \phi}$. This, in turn, implies that for fixed $n_L$, the time complexity of computing all tilting quantity functions is also linear in the size of $\D_{\gph \phi}$.

\begin{example}
    We continue the running example by indicating how to compute tilting quantity functions using the network in Figure \ref{fig:exampleNetwork}. Consider an initial valid inequality $-x + 3y \leq 6$, which is tight at $x = 3$ ($(x_1, x_2) = (1, 1)$ for the binary expansion). This inequality is depicted as a black line in Figure \ref{fig:example1-cutTilting}. Vectors $\sigma^{\downarrow}$ and $\sigma^{\uparrow}$ are computed by solving longest path problems over $\D_{\gph \phi}(\alpha, \gamma)$, where $\alpha = (-1, -2)$ and $\gamma = 3$ are illustrated as black arc labels in Figure \ref{fig:example1-networkTilting}. We have
    \medskip    %
    \begin{center}$
    \begin{array}{lllllll}
        \text{(Layer 1)} & \sigma^{\downarrow}_r = 0, \\
        \text{(Layer 2)} & \sigma^{\downarrow}_1 = 0, & \sigma^{\uparrow}_1 = -2, & \sigma^{\downarrow}_2 = -1, &\sigma^{\uparrow}_2 = 7, \\
        \text{(Layer 3)} & \sigma^{\downarrow}_3 = -2, & \sigma^{\uparrow}_3 = -0, & \sigma^{\downarrow}_4 = 0, & \sigma^{\uparrow}_4 = 3, & \sigma^{\downarrow}_5 = -3, &  \sigma^{\uparrow}_5 = 9, \\
        \text{(Layer 4)} & \sigma^{\uparrow}_t = 0.
    \end{array}$
    \end{center}
    \medskip

    We can now compute all tilting quantity functions:
    \medskip
    \begin{center}$
    \begin{array}{l}
        t^0_1(\alpha, \gamma, \beta) = 6 - \max\{ -2, 3 \} = 3, \\ 
        t^1_1(\alpha, \gamma, \beta) = 6 - (-1) - \max\{ 3, 7 \} = 0, \\
        t^0_2(\alpha, \gamma, \beta) = 6 - \max\{ 3, 2 \} = 3, \\
        t^1_2(\alpha, \gamma, \beta) = 6 - (-2) - \max\{ 0, 8 \} = 0.
    \end{array}$
    \end{center}
\medskip

    As $t_1^1, t_2^1 = 0$ and $t_1^0, t_2^0 = 3$, the inequality can only be tilted with respect to $(1 - x_1)$ or $(1 - x_2)$ by at most 3 units. Tilting with respect to $(1 - x_1)$ by 1 unit yields the tilted inequality induced by $(\widehat{\alpha}, \gamma, \widehat{\beta})$, where $\widehat{\alpha} = (\alpha_1 - 1, \alpha_2)$ and $\widehat{\beta} = \beta - 1$. By repeating this procedure over $\D_{\gph \phi}(\widehat{\alpha}, \gamma)$, we can tilt the resulting inequality with respect to $(1 - x_2)$ by 2 units, achieving the final tilted inequality $-2x_1 - 4x_2 + 3y \leq 3$, illustrated as an orange line in Figure \ref{fig:example1-cutTilting}. \qedwhite
\end{example}

\begin{figure}[t]
    \centering
    \begin{subfigure}[t]{0.5\textwidth}
        \centering
        \begin{tikzpicture}[scale=1.25]
            \draw[very thin, color=gray] (-.9, -.1) grid (3.9, 3.9); 
        
            \draw[->, line width=1] (-1.1,0) -- (4, 0) node[right] {$x$};
            \draw[->, line width=1] (0, 0) -- (0, 4) node[above] {$y$};
            \foreach \x in {0, 1, 2, 3}
                \draw (\x cm, -1pt) -- (\x cm,-3pt) node[anchor=north] {$\x$};
            \foreach \y in {1, 2, 3}
                \node[anchor=east] at (-.9, \y cm) {$\y$};
        
            \draw[fill=myblue, opacity=0.15, dashed] (0, 0) -- (0, 1) -- (3, 3) -- (3, 0) -- (0, 0);
        
            \draw[line width=2] (-1, 1.67) -- (3.9, 3.29);
            \draw[myorange, line width=2] (-1, 0.33) -- (3.9, 3.6);
        
            \draw[->, dashed, line width=2, bend right=40, myorange] (0.5, 2.15) to (0.5, 1.4);
        
            \draw[line width=2.5] (-0.5, 1) -- (0, 3) -- (2, 3.5) -- (3, 3) -- (2.25, 0) -- (1.5, 0) -- (-0.52, 1.02);

            \draw[->, line width=2, myblue] (0, 1) -- (0, 0);
            \draw[->, line width=2, myblue] (1, 1) -- (1, 0);
            \draw[->, line width=2, myblue] (3, 3) -- (3, 0);
            
            \node[fill=black, circle, scale=.6] at (0, 2) {};
            \node[fill=black, circle, scale=.6] at (0, 3) {};
            \node[fill=black, circle, scale=.6] at (1, 2) {};
            \node[fill=black, circle, scale=.6] at (1, 3) {};
            \node[fill=black, circle, scale=.6] at (2, 1) {};
            \node[fill=black, circle, scale=.6] at (2, 2) {};    
            \node[fill=black, circle, scale=.6] at (2, 3) {};
        
            \node[fill=myblue, circle, scale=.6] at (0, 1) {};
            \node[fill=myblue, star, scale=.6] at (1, 1) {};
            \node[fill=myblue, circle, scale=.6] at (2, 0) {};    
            \node[fill=myblue, circle, scale=.6] at (3, 3) {};
        
            
        \end{tikzpicture}
        \caption{Initial (black) and tilted (orange) valid inequalities.}
        \label{fig:example1-cutTilting}
    \end{subfigure}%
    ~\hspace{.5cm}
    \begin{subfigure}[t]{0.5\textwidth}
        \centering
        \begin{tikzpicture}[scale=0.85, ->, >=stealth', shorten >=1pt, auto, node distance=3cm, thick, 
                main node/.style={circle, draw, font=\sffamily\footnotesize\bfseries, minimum size=4mm, scale=.75},
                special node/.style={circle, draw, font=\sffamily\bfseries, minimum size=4mm, scale=.85},
                arc label node/.style={text opacity=1, pos=0.5, anchor=center, minimum size=5mm}
            ]
            \node[special node] (r) at (0, 0) {r};
            
            \node[main node] (1) at (-1.5, -2) {1};
            \node[main node] (2) at (1.5, -2) {2};
            \node[main node] (3) at (-2, -4) {3};
            \node[main node] (4) at (0, -4) {4};
            \node[main node] (5) at (2, -4) {5};
        
            \node[special node] (t) at (0, -6.5) {t};
        
            \node[] (x1) at (3.5, -1) {$\bm{x_1}$};
            \node[] (x2) at (3.5, -3) {$\bm{x_2}$};
            \node[] (z) at (3.5, -5) {\color{myblue} $\bm{z}$};
        
            \path[every node/.style={font=\sffamily\small,
                fill=white, inner sep=1pt}]
                (r) edge [dashed, line width=1.5] (1)
                    edge [line width=1.5] node [arc label node] {$\bm{-1}$ \color{myorange} $\bm{(-2)}$} (2)
                (1) edge [line width=1.5] node [arc label node] {$\bm{-2}$ \color{myorange} $\bm{(-4)}$} (3)
                    edge [dashed, line width=1.5] (4)
                (2) edge [dashed, line width=1.5] (4)
                    edge [line width=1.5] node [arc label node] {$\bm{-2}$ \color{myorange} $\bm{(-4)}$} (5)
                (3) edge [myblue, line width=1.5] node [arc label node] {$\bm{0}$} (t)
                (4) edge [myblue, line width=1.5] node [arc label node] {$\bm{3}$} (t)
                (5) edge [myblue, line width=1.5] node [arc label node] {$\bm{9}$} (t);
        \end{tikzpicture}
        \caption{Initial (black) and tilted (orange) cut coefficients}
        \label{fig:example1-networkTilting}
    \end{subfigure}
    \vspace{.2cm}
    \caption{Example of tilting valid inequality $-x + 3y \leq 6$ into $-2x + 3y \leq 3$}
    \label{fig:example1-tilting}
\end{figure}

\section{Experimental Results} \label{sec:experiments}
We next present a set of three computational experiments to evaluate the efficiency and strength of the proposed cuts. In the first one, we tackle instances of a multidimensional knapsack bilevel problem, aiming to assess the impact of network size on the available methods by increasing the number of follower constraints. In the second one, we address instances of a binary bilevel program in which the follower's constraint matrix exhibits a block-diagonal structure. As each block can be used to compile a separate network, we assess the impact of the number of networks on the available methods. Finally, we test the effectiveness of including the proposed cuts into state-of-the-art solvers when solving a subset of unsolved problems from the BOBILib benchmark instance library \citep{thurauf2026bobilib}.

We refer to the state-of-the-art solvers of \cite{fischetti2017flms} and \cite{tahernejad2020mibs} as \textsf{FLMS} and \textsf{MibS}, respectively. Both solvers are compiled using their latest version (\textsf{MibS}'s is 1.2.1 and \textsf{FLMS} has a unique version) and are executed using their default settings. We compile networks using Algorithm 2 in \cite{LBC24}, and sequentially strengthen valid inequalities by choosing the coefficient with the smallest tilting quantity, as proposed in \cite{castro2022lifting}. For the first two sets of experiments, we compile exact networks by setting infinite budget. For the last one, we compile relaxed networks (approximate value networks in \cite{LBC24}) according to the procedure proposed in Section 5.2 of \cite{LBC24}. We only include results for our cutting plane approach using tilted cuts, as they significantly outperform the non-tilted version.

Our algorithms are implemented in C\texttt{++} and all computations are executed on a 12-core Intel(R) Xeon(R) Gold 6248R CPU \@ 3.00GHz running Linux Ubuntu. All single-level optimization programs required by our algorithms are solved using Gurobi 10, while \textsf{FLMS} and \textsf{MibS} use CPLEX 12.

\subsection{Multidimensional knapsack bilevel problem}
We first study a class of multidimensional knapsack bilevel problems in which the leader problem has no constraints, \ie, $m_L = 0$, and the follower problem has a number of knapsack-type constraints $m_F \in \{1, 2, 3, 4\}$. The purpose of this experiment is to evaluate and compare the performance of available network flow-based approaches for different network sizes. Due to the nature of the network construction algorithm in \cite{LBC24}, a higher value of $m_F$ leads to larger networks.

\medskip

\noindent \textbf{Instances.} 
We consider $n_L = n_F = 30$ and denote as $\mathrm{DU}[l, u]$ the discrete uniform distribution on the integers $\{ l, \ldots, u \}$. Objective function and follower constraint matrices entries are sampled as follows:
\[
    c_L \sim \mathrm{DU}[-10, 10],\quad c_F \sim \mathrm{DU}[-100, 100],\quad d \sim \mathrm{DU}[-10, -1],
\]
\[
    C \sim \mathrm{DU}[-2, 2],\quad D \sim \mathrm{DU}[0, 10].
\]

The right-hand side vector of the follower's problem is computed as $b = \alpha \cdot (C + D) \mathbf{1}$, where $\alpha \in \{0.25, 0.5, 0.75\}$ is a scaling factor. These values are chosen to create instances in which the leader's influence over the follower's feasible region is not too strong, and the size of the network is well controlled by the tighter bounds on the entries of $C$. We generate 10 instances for each value of $(m_F, \alpha)$, for a total of 120 instances.

\medskip

\noindent  \textbf{Comparison.}
We compare directly solving program \eqref{model:LBC-graphReformulation} using an off-the-shelf MIP solver, and a cutting-plane approach using the proposed cut families introduced in Section \ref{sec:new-reformulations} after applying the tilting procedure described in Section \ref{sec:tiltingFunctionsWithDDs}. We differentiate between ``no-good cuts'' (NGC) and enhanced optimality cuts; see Section \ref{sec:optCuts}.
We also compare target cuts \citep{buchheim2008localcuts,tjandraatmadja2019target}, which can be separated by solving the dual of a network flow program over the network. If $\conv(\dom \Y)$ is full-dimensional, then target cuts are guaranteed to be facet-defining \citep{tjandraatmadja2019target}. 

Observe that, as we compile exact networks, all these solution methods are exact. However, in general, 
we may not be capable of solving program \eqref{model:LBC-graphReformulation} within the time limit, or we may use a relaxed network. In both cases, the result yields a valid lower bound. Hence, we are interested in evaluating what portion of this bound is recovered within the time limit. Let $\D$ be the compiled network, and $v^\ast(\D)$ the optimal value of program \eqref{model:LBC-graphReformulation} using $\D$ instead of $\D_{\gph \phi}$. Moreover, consider a time limit $T > 0$ and a set $S$ of solution methods. We define the bound recovery ratio for method $s \in S$ at time $t > 0$ as
$$
    r^s_t(\D) = \frac{v^s_{\texttt{LB}, t}(\D) - \text{LB}}{\text{UB} - \text{LB}},
$$
where $v^s_{\texttt{LB}, t}(\D)$ and $v^s_{\texttt{UB}, t}(\D)$ are the lower and upper bounds achieved by method $s$ at time $t$, respectively,~$\text{LB} = \min_{s \in S}\{v_{\texttt{LB}, 0}^s(\D)\}$ is the lowest initial lower bound achieved by any method (\eg, the LP relaxation), and~$\text{UB} = \min_{s \in S} \{v^s_{\texttt{UB}, T}(\D) \}$ is the best value (upper bound) achieved by any solution method. We use the convention $r_t^s(\D) = 1$ when $\text{UB} = \text{LB}$, in which case optimality is already certified. This way, we have $r_t^s(\D) \in [0, 1]$ and $v^\ast(\D) \in [v^s_{\texttt{LB}, t}(\D), \text{UB}]$ for any $t \in [0, T]$ and $s \in S$. Moreover, $r_t^s(\D) = 1$ if and only if $v_{\texttt{LB}, t}^s(\D) = \text{UB} = v^\ast(\D)$. Thus, $r_t^s(\D) = 1$ indicates that method $s$ achieves the optimal value of program \eqref{model:LBC-graphReformulation} using $\D$ by time $t$.

\medskip

\noindent \textbf{Results.} 
Figure \ref{fig:results-multirow} illustrates average values of the bound recovery measure $r$ for different $m_F$ values with a time limit $T = 3,600$. This time limit does not consider the time used to compile the network. We observe that, for small size networks ($m_F \in \{1, 2\}$), directly solving the reformulation \eqref{model:LBC-graphReformulation} corresponds to the fastest approach. For larger networks ($m_F \in \{3, 4\}$), the cutting-plane approaches show a better performance than the reformulation, especially those that do not require solving a linear program. Furthermore, among the proposed cuts, feasibility cuts exhibit an overall faster bound recovery, indicating that the higher cut generation time used to solve the Lagrangian relaxation gets paid off by providing a stronger seed inequality.

\begin{figure*}[t]
    \centering
    \begin{subfigure}[t]{0.48\textwidth}
        \centering
        \includegraphics[width=\textwidth]{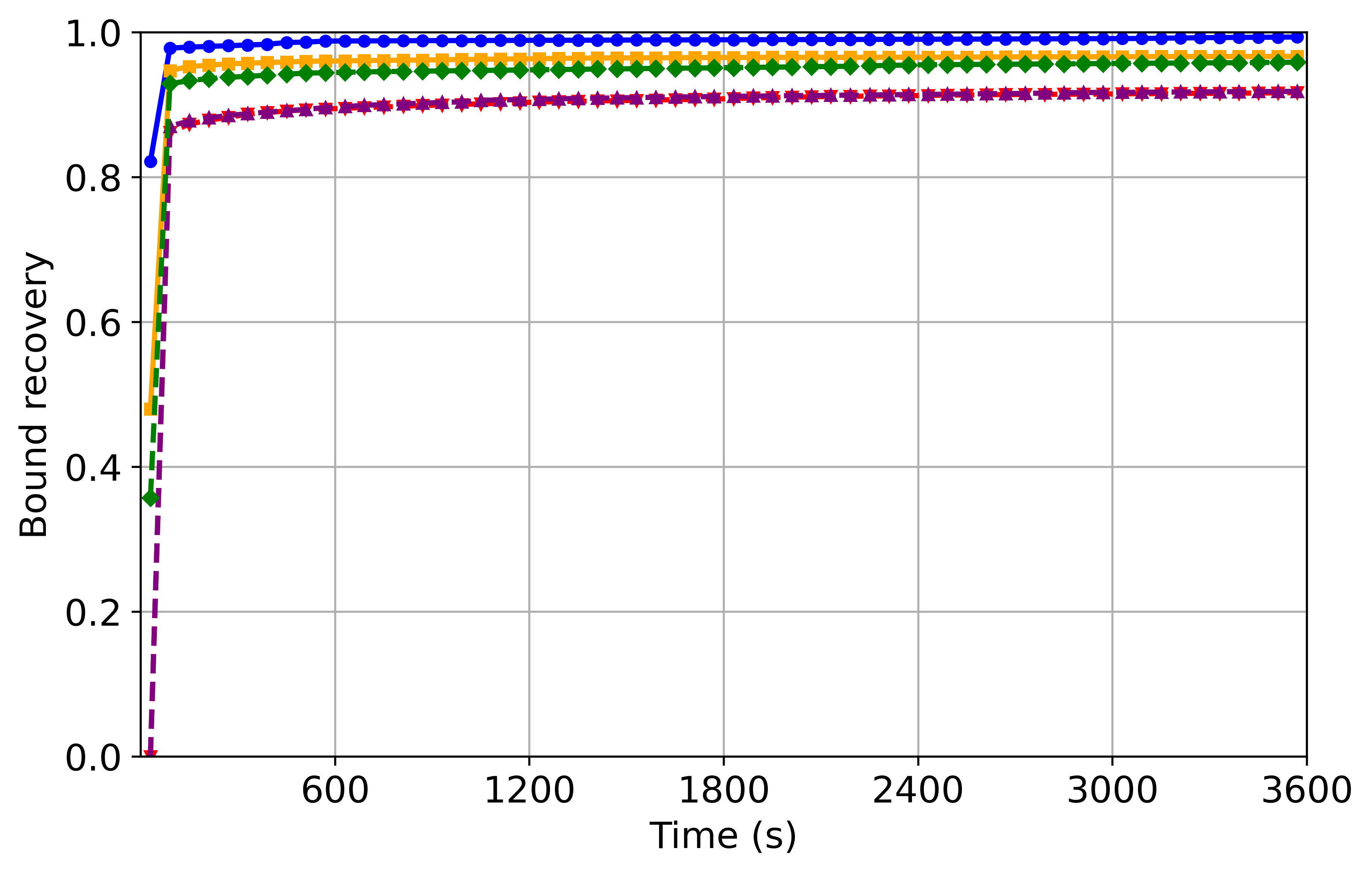}
        \caption{$m_F = 1$}
    \end{subfigure}%
    ~ 
    \begin{subfigure}[t]{0.48\textwidth}
        \centering
        \includegraphics[width=\textwidth]{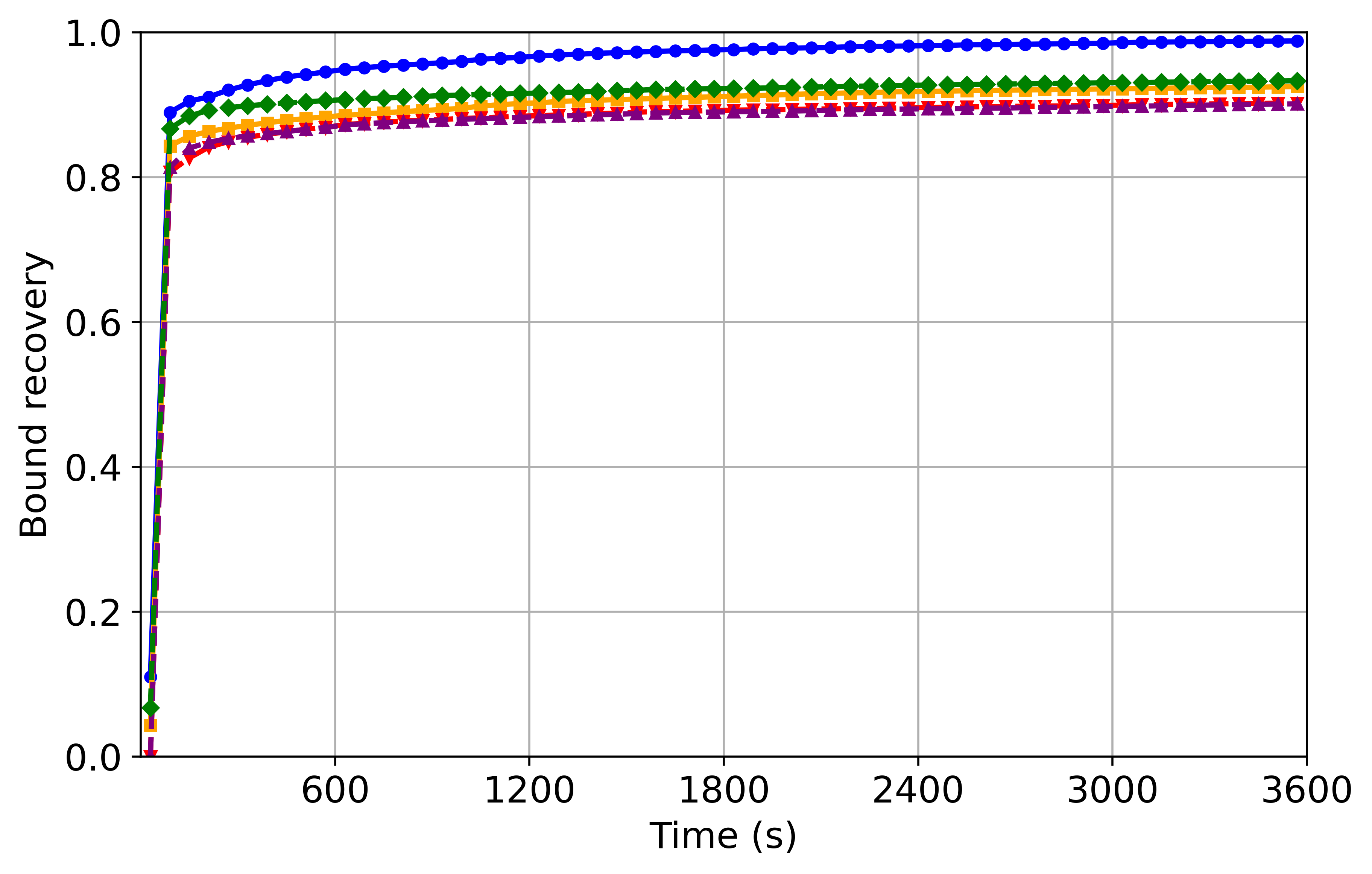}
        \caption{$m_F = 2$}
    \end{subfigure}
    ~
    \begin{subfigure}[t]{0.48\textwidth}
        \centering
        \includegraphics[width=\textwidth]{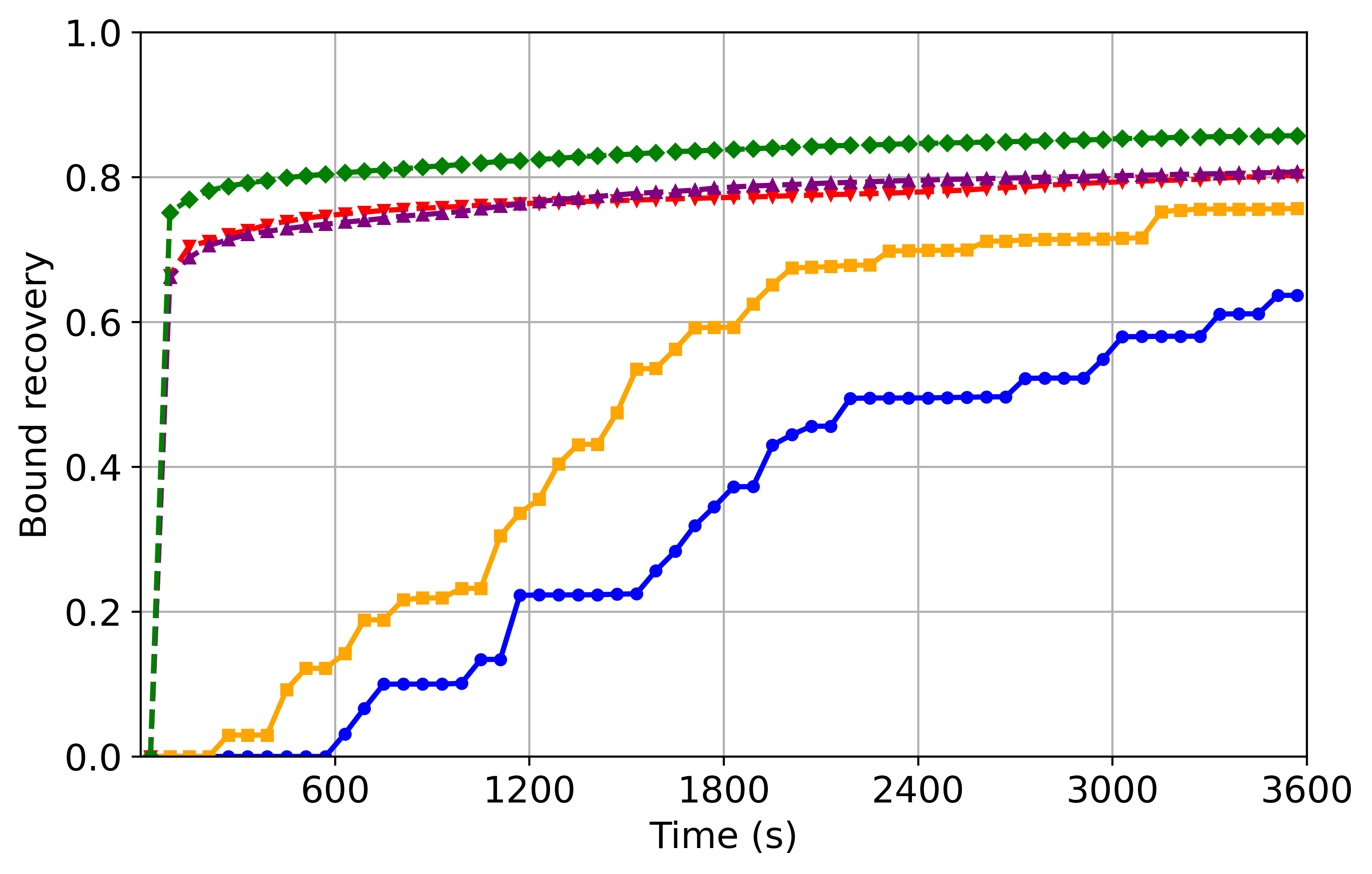}
        \caption{$m_F = 3$}
    \end{subfigure}
    ~
    \begin{subfigure}[t]{0.48\textwidth}
        \centering
        \includegraphics[width=\textwidth]{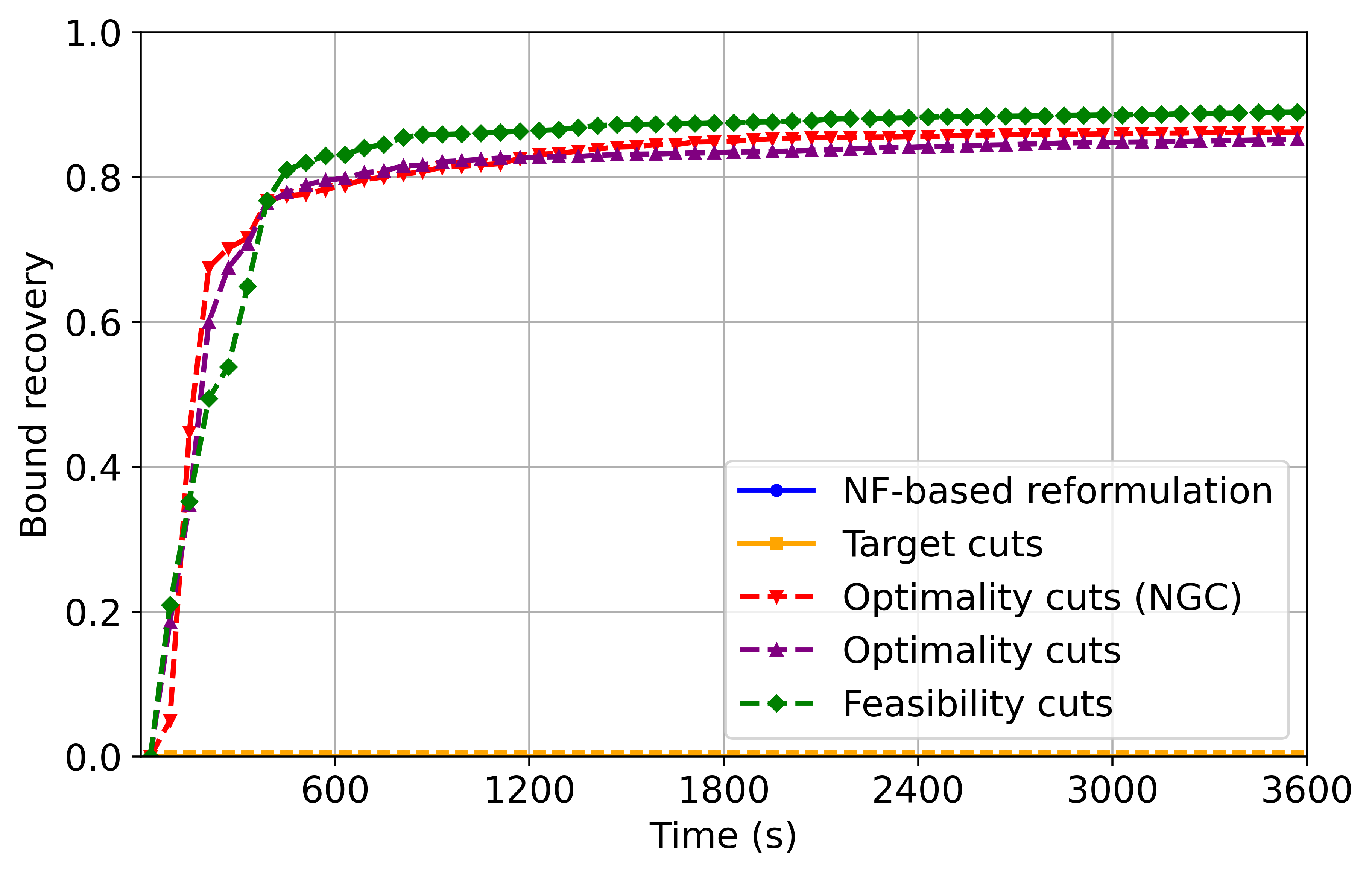}
        \caption{$m_F = 4$}
    \end{subfigure}
    \vspace{.2cm}
    \caption{Results for the multidimensional knapsack instances}
    \label{fig:results-multirow}
\end{figure*}

\subsection{Block-diagonal follower bilevel problem}
We now study a class of binary bilevel problems in which the follower's problem has a block-diagonal constraint matrix. We can alternatively interpret these problems as binary bilevel programs with multiple independent followers~\citep{calvete2007multifollower}. The objective of this experiment is to assess the effect of decomposable structures in the follower's problem. Let $k > 0$ be the number of follower blocks. In these problems, the follower's problem constraint matrix $D$ has a block-diagonal structure with $k$ blocks, each with a knapsack constraint, \ie, $m_F = k$. Accordingly, the follower's objective function vector $d$ and right-hand side vector $b$, as well as the leader's objective function vector $c_F$ and constraint matrix $B$, are split into $k$ smaller vectors (matrices) $\{ d^p \}_{p \in [k]}$, $\{b^p\}_{p \in [k]}$, $\{ c_F^p \}_{p \in [k]}$, and $\{B^p\}_{p \in [k]}$, respectively, of appropriate dimension. Matrix $C$ is defined for each block as $\{C^p\}_{p \in [k]}$. This way, we can rewrite the follower's value function as $\phi(x) = \sum_{p \in [k]} \phi^p(x)$, where
$$
    \phi^p(x) = \min\{ d^{p^\top} y^p : D^p y^p \leq b^p - C^p x \}
$$
is the optimal value of the follower's problem associated with the $p$-th block. By extending Theorem 3.1 in \cite{calvete2007multifollower} to the binary setting, we obtain that~$(x, z) \in \hyp \phi$ if and only if there exist~$z^p$ such that~$(x, z^p) \in \hyp \phi^p$ for every $p \in [k]$ and~$z = \sum_{p \in [k]} z^p$. Hence, we can equivalently write program \eqref{model:hypConvReformulation} as
\begin{alignat*}{3}
    \min & & \hspace{.2cm} & c_L^\top x + \sum_{p \in [k]} c_F^{p^\top} y^p \\
    \st & & & Ax + \sum_{p \in [k]} B^p y^p \leq a \\
    & & & C^p x + D^p y^p \leq b^p & \hspace{.5cm} & p \in [k] \\
    & & & (x, d^{p^\top} y^p) \in \conv( \hyp \phi^p ) && p \in [k].
\end{alignat*}

As it is done for $(x, d^\top y) \in \conv(\hyp \phi)$, we can enforce each constraint $(x, d^{p^\top} y^p) \in \conv(\hyp \phi^p)$ with a network $\D_{\gph \phi^p} = (\N^p, \A^p, l^p, \nu^p)$ compiled only considering the $p$-th block problem parameters. Hence, we can rewrite the network flow-based reformulation \eqref{model:LBC-graphReformulation}, for this setting, as
\begin{alignat*}{3}
    \min & & \hspace{.2cm} & c_L^\top x + \sum_{p \in [k]} c_F^{p^\top} y^p \\
    \st & & & Ax + \sum_{p \in [k]} B^p y^p \leq a \\
    & & & C^p x + D^p y^p \leq b^p & \hspace{.5cm} & p \in [k] \\
    & & & \sum_{a \in \delta^+(u)} w^p_a - \sum_{a \in \delta^-(u)} w^p_a = \mathbf{1}(u = r^p) - \mathbf{1}(u = t^p) & \hspace{.5cm} & u \in \N^p \\
    & & & x_j = \sum_{a \in \A^p_L: \idx(a) = j} \nu^p_a w^p_a & & j \in [n_L],\ p \in [k] \\
    & & & d^{p^\top} y^p \leq l^{p^\top} w^p && p \in [k] \\
    & & & w^p \geq 0 && p \in [k].
\end{alignat*}

Similarly, we can separate optimality and feasibility cuts using each network $\D_{\gph \phi^p}$, as Propositions \ref{thm:optimalityReformulationEquivalence} and \ref{thm:feasibilityReformulationEquivalence} hold.

\medskip

\noindent \textbf{Instances.}
We consider $n_L = n_F \in \{30, 45, 60, 75\}$. Objective function and follower constraint matrices entries are sampled as follows:
\[
    c_L, c_F \sim \mathrm{DU}[-100, 100],\quad d \sim \mathrm{DU}[-100, -1],\quad C \sim \mathrm{DU}[0, 10],\quad D \sim \mathrm{DU}[0, 100].
\]

As in the previous problem class, the right-hand side vector of the follower's problem is computed as $b = \alpha \cdot (C + D) \mathbf{1}$, where $\alpha \in \{0.25, 0.5, 0.75\}$ is a scaling factor. These values are chosen to achieve a relatively generic set of instances, where the leader problem has no constraints. We generate 10 instances for each value of $(n_F, k, \alpha)$, for a total of 360 instances.

\medskip

\noindent \textbf{Comparison.}
We compare directly solving program \eqref{model:LBC-graphReformulation}, the cutting plane approach using feasibility cuts, and state-of-the-art bilevel solvers \textsf{FLMS} \citep{fischetti2017flms} and \textsf{MibS} \citep{tahernejad2020mibs}. Furthermore, we include two extra variants of each bilevel solver: (i) we replace the HPR with reformulation \eqref{model:LBC-graphReformulation}, and (ii) we add feasibility cuts to the HPR. Networks compilation and cuts generation (if applicable) is done within a time limit of 300 seconds, after which the corresponding bilevel solver is executed for 3,300 seconds. Both variants use the same networks. The variant of \textsf{MibS} with reformulation \eqref{model:LBC-graphReformulation} is excluded as it fails to produce finite gaps for several instances within the time limit.

\medskip

\noindent \textbf{Results.}
Figures \ref{fig:results-multirow_part1} and \ref{fig:results-multirow_part2} present two-panel performance plots. The left panes show the cumulative number of solved instances, and the right panes the optimality gap of the unsolved ones. We limit the maximum gap to 100\% for illustrative purposes. Overall, when $k = 1$, directly solving reformulation \eqref{model:LBC-graphReformulation} yields the best results, especially for $n = 75$. For any other configuration, the cutting plane approach outperforms the other methods. 
In particular, the results for $k > 1$ align with the fact that the cutting plane approach fully decomposes the convexification of $\hyp \phi$ and the information retrieval process into different networks, whereas the reformulation \eqref{model:LBC-graphReformulation} manages all networks within the same program, yielding, on average, a higher number of variables as $k$ increases.
Regarding state-of-the-art solvers, we observe that including feasibility cuts into the initial reformulation significantly improves its performance, whereas replacing the HPR with \eqref{model:LBC-graphReformulation}, in general, hurts it. Even though reformulation \eqref{model:LBC-graphReformulation} provides an exact representation of $\conv(\hyp \phi)$, the extended variable space and the larger formulation greatly impact the effectiveness of these solvers. This also highlights the benefits of retrieving the networks' information through cuts that are projected into the original variable space.

\begin{figure*}[ht!]
    \centering
    \begin{subfigure}[t]{0.5\textwidth}
        \centering
        \includegraphics[width=\textwidth]{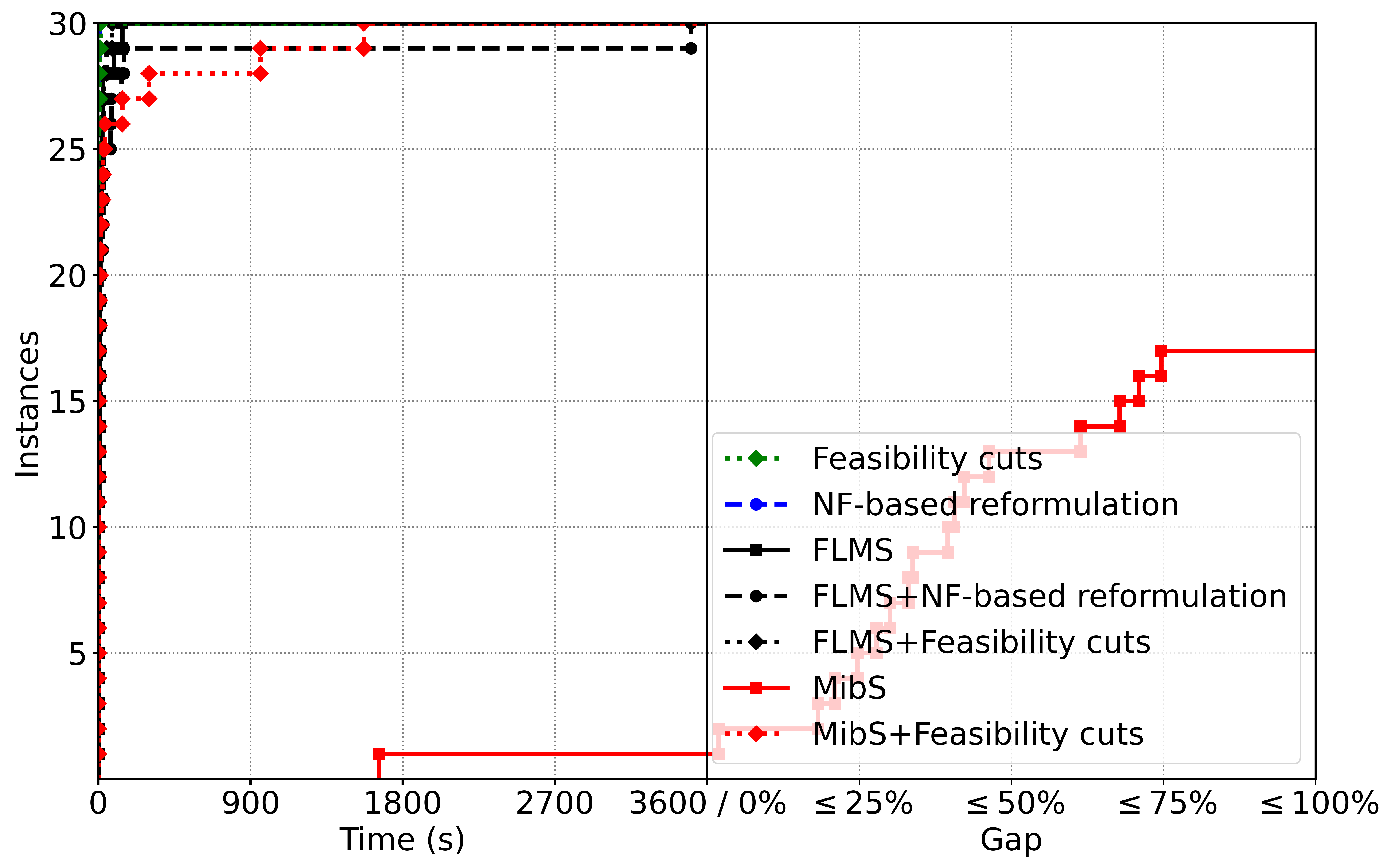}
        \caption{$n = 30,\ k  = 1$}
    \end{subfigure}%
    ~ 
    \begin{subfigure}[t]{0.5\textwidth}
        \centering
        \includegraphics[width=\textwidth]{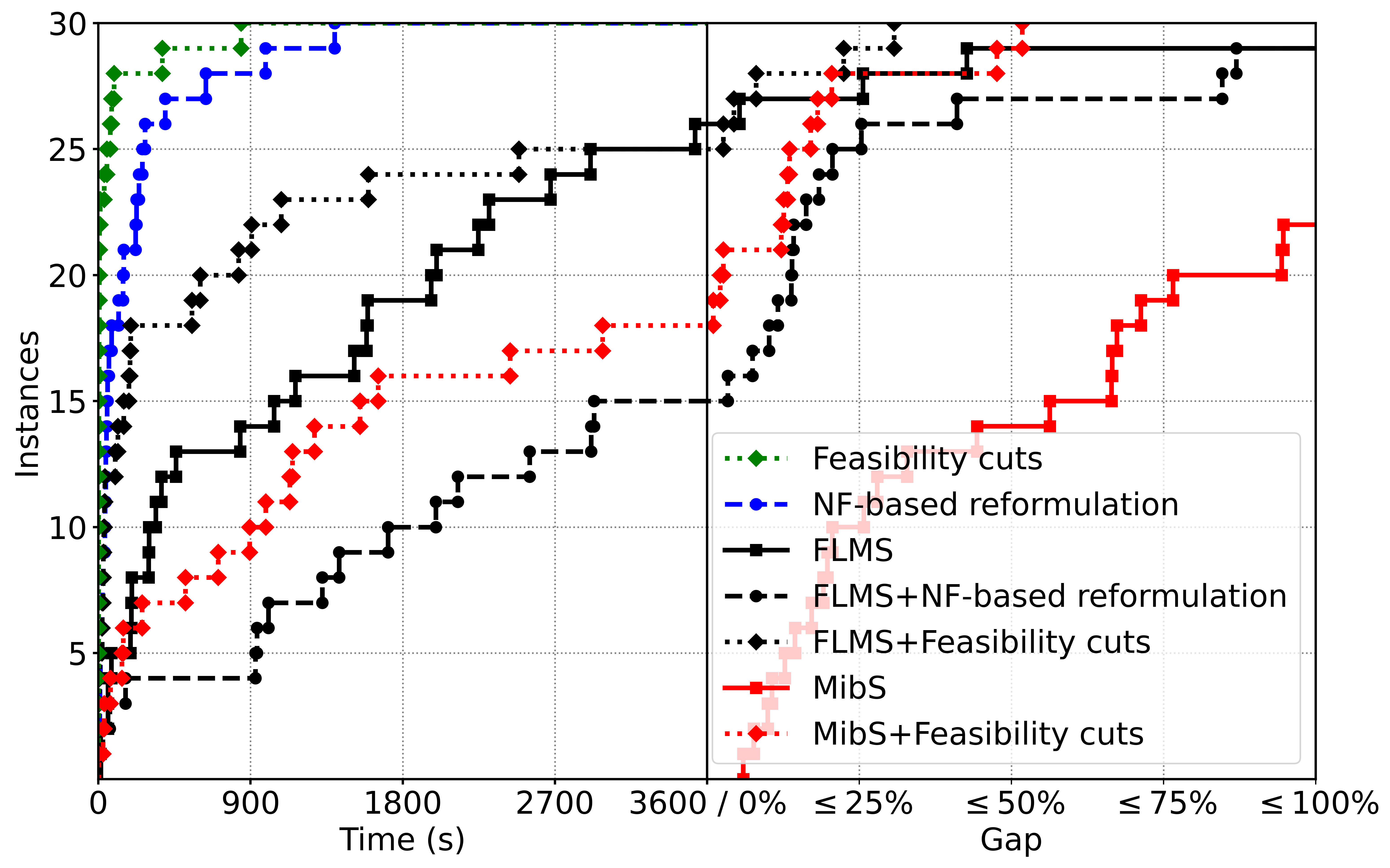}
        \caption{$n = 30,\ k = 3$}
    \end{subfigure}%
    \hfill \\
    \begin{subfigure}[t]{0.5\textwidth}
        \centering
        \includegraphics[width=\textwidth]{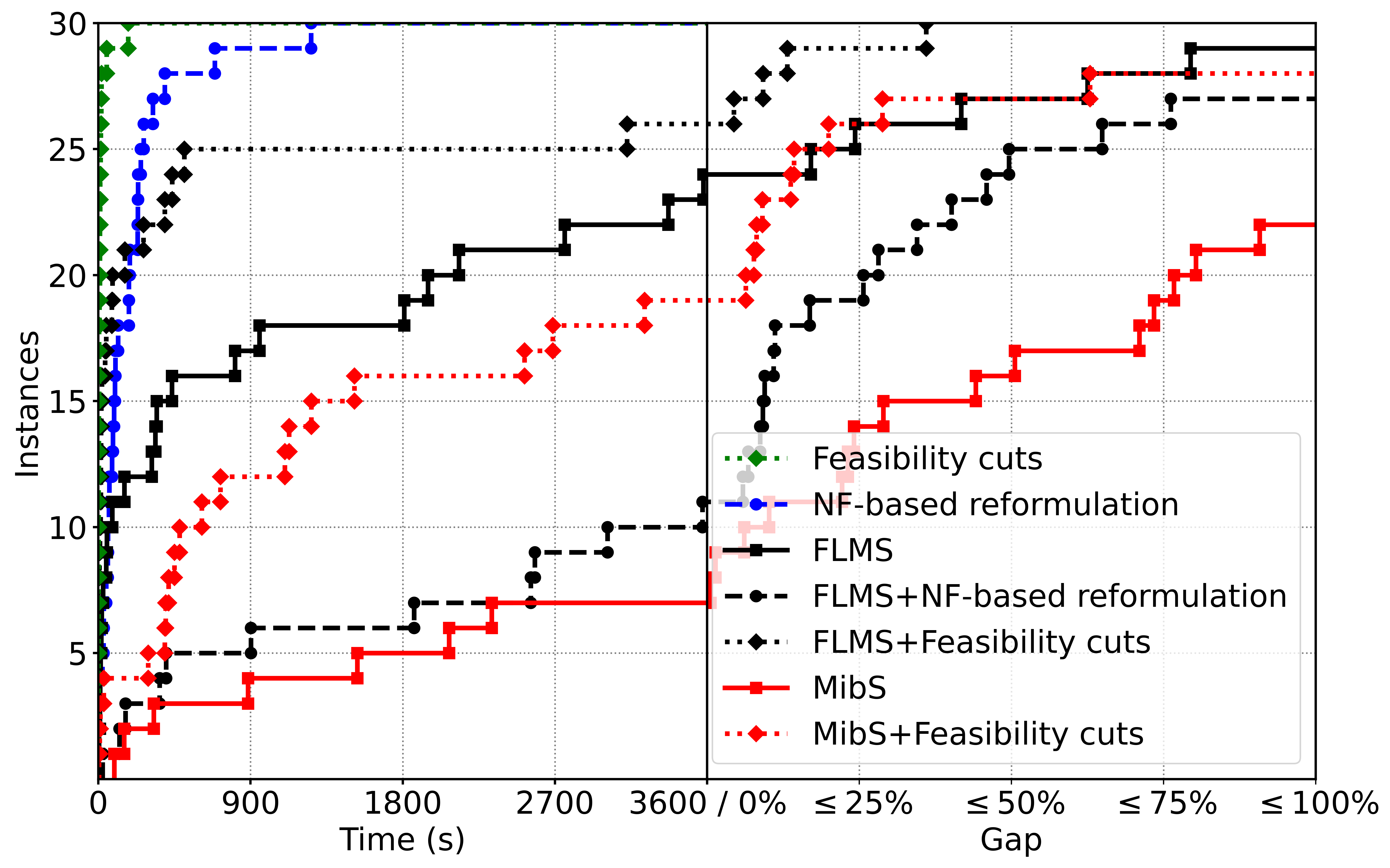}
        \caption{$n = 30,\ k = 5$}
    \end{subfigure}%
    ~
    \begin{subfigure}[t]{0.5\textwidth}
        \centering
        \includegraphics[width=\textwidth]{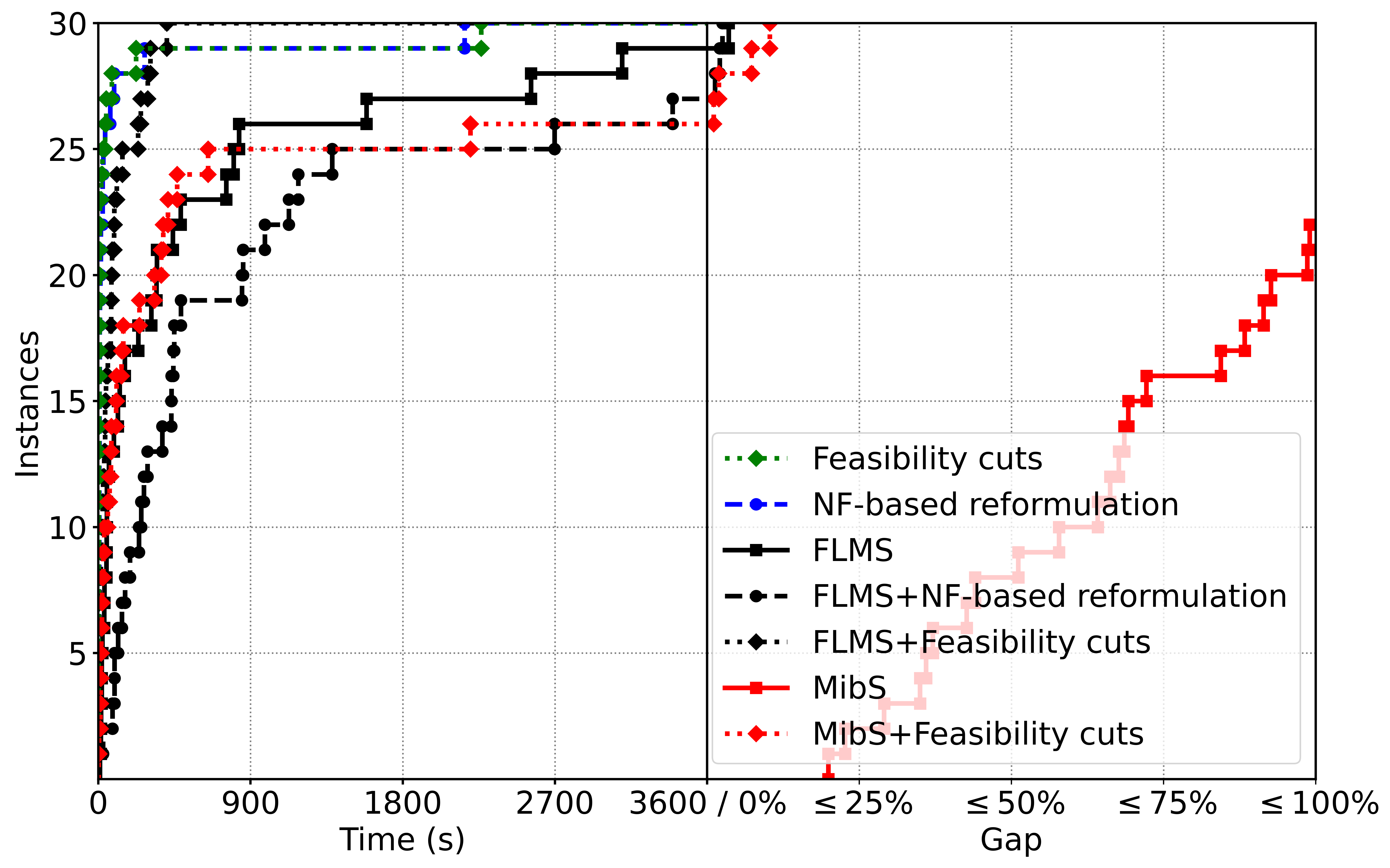}
        \caption{$n = 45,\ k = 1$}
    \end{subfigure}%
    \hfill \\
    \begin{subfigure}[t]{0.5\textwidth}
        \centering
        \includegraphics[width=\textwidth]{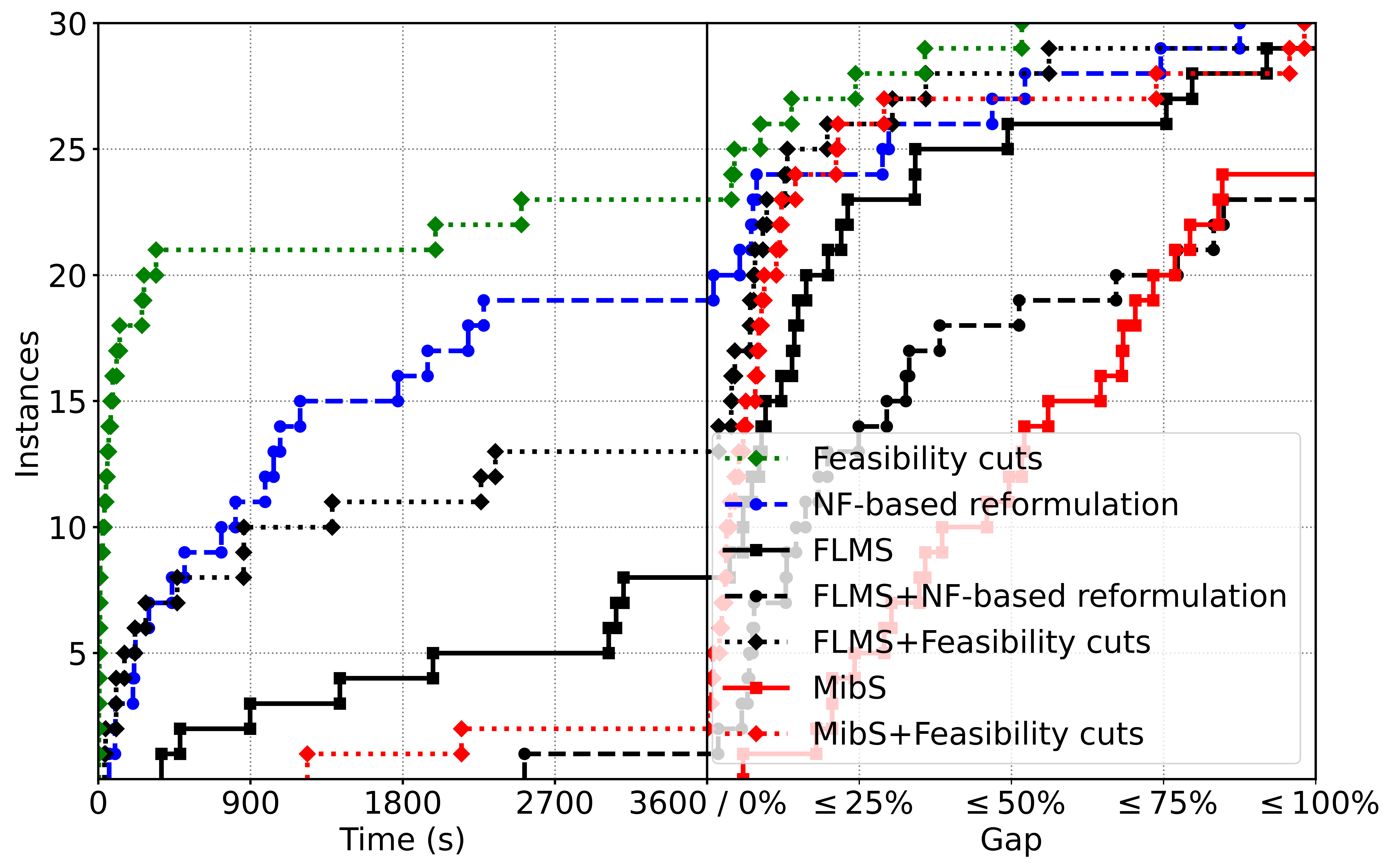}
        \caption{$n = 45,\ k = 3$}
    \end{subfigure}%
    ~
    \begin{subfigure}[t]{0.5\textwidth}
        \centering
        \includegraphics[width=\textwidth]{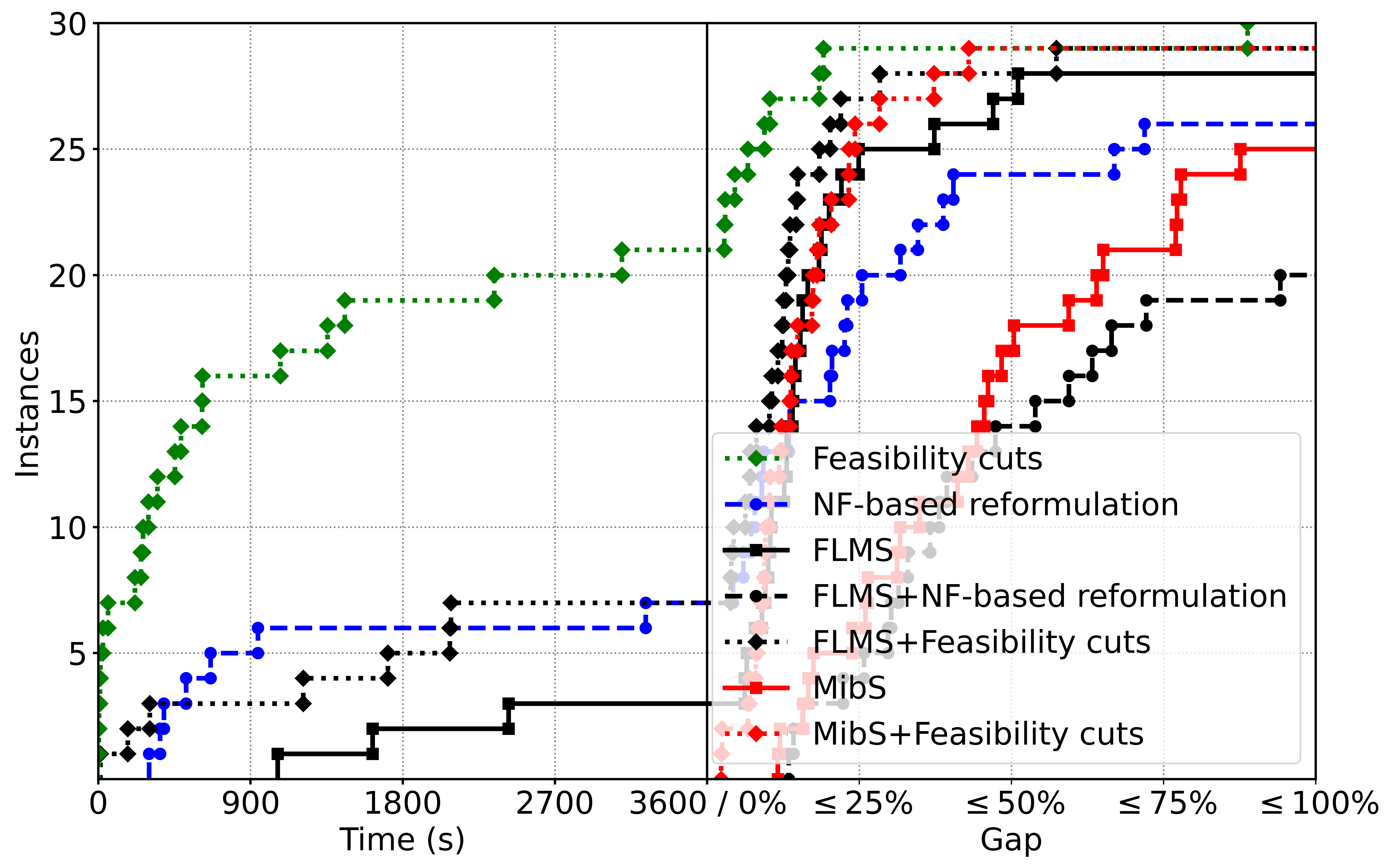}
        \caption{$n = 45,\ k = 5$}
    \end{subfigure}%
    \vspace{.2cm}
    \caption{Results for the block-diagonal follower instances}
    \label{fig:results-multirow_part1}
\end{figure*}

\begin{figure*}[ht!]
    \centering
    \begin{subfigure}[t]{0.5\textwidth}
        \centering
        \includegraphics[width=\textwidth]{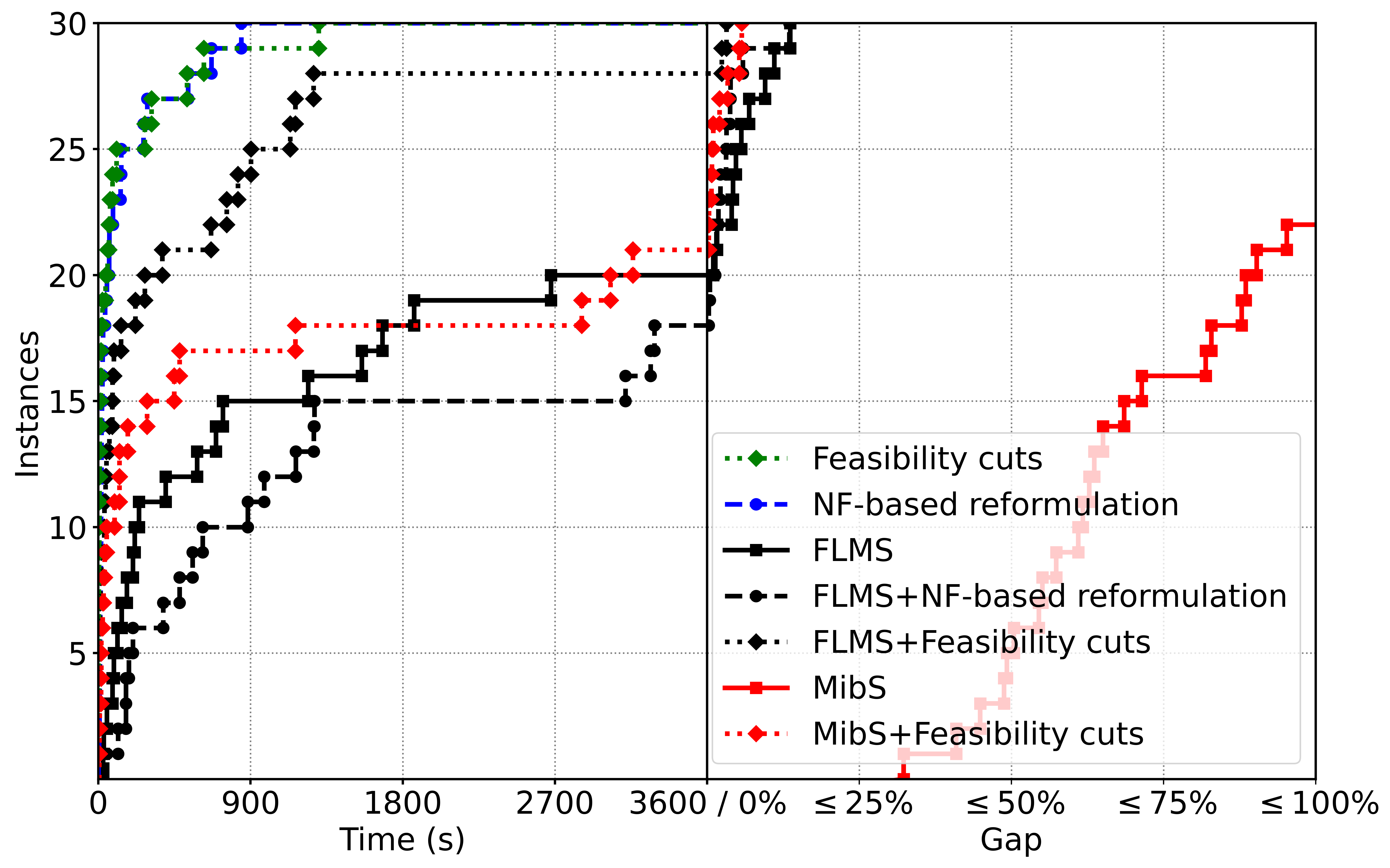}
        \caption{$n = 60,\ k = 1$}
    \end{subfigure}%
    ~
    \begin{subfigure}[t]{0.5\textwidth}
        \centering
        \includegraphics[width=\textwidth]{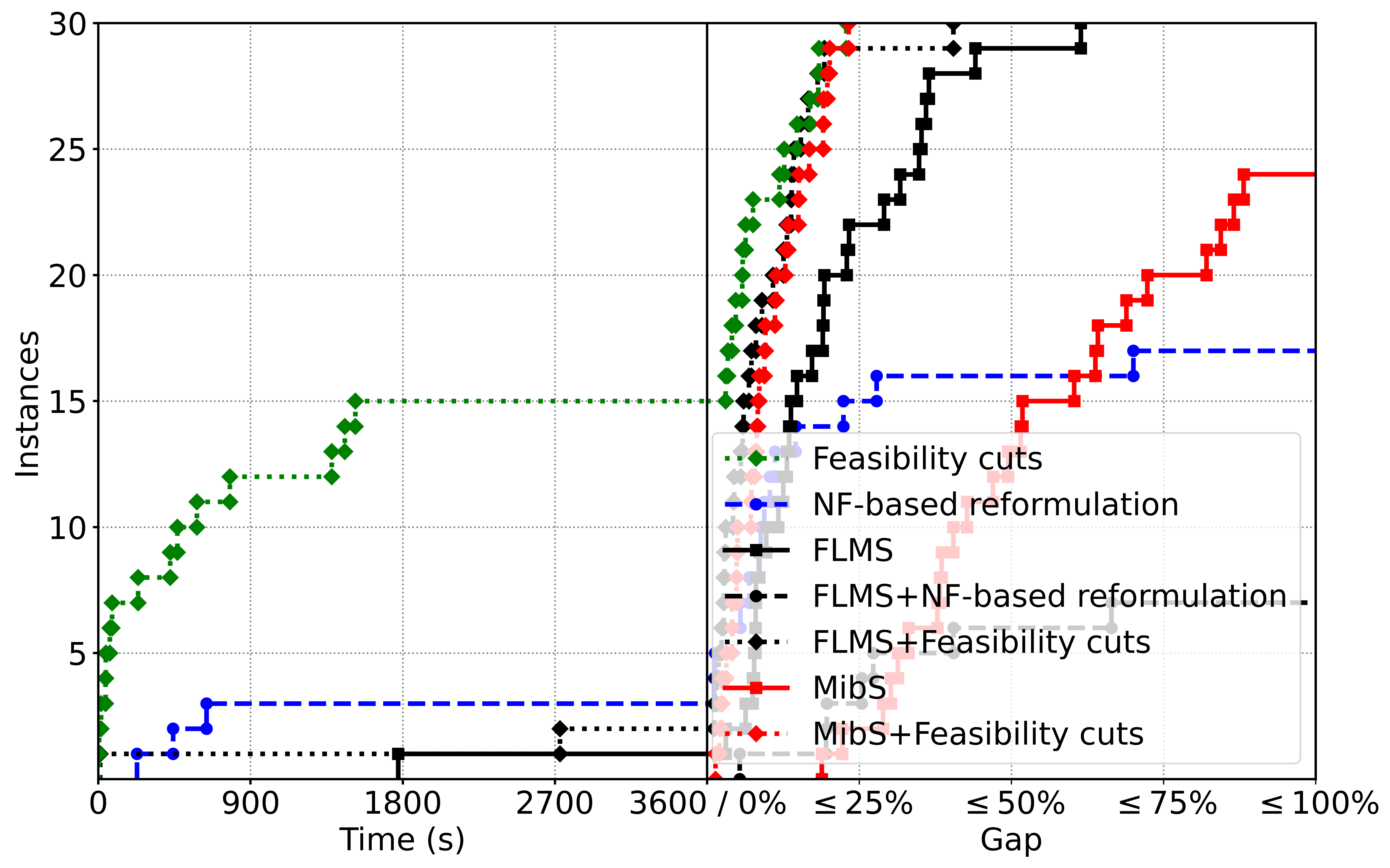}
        \caption{$n = 60,\ k = 3$}
    \end{subfigure}%
    \hfill \\
    \begin{subfigure}[t]{0.5\textwidth}
        \centering
        \includegraphics[width=\textwidth]{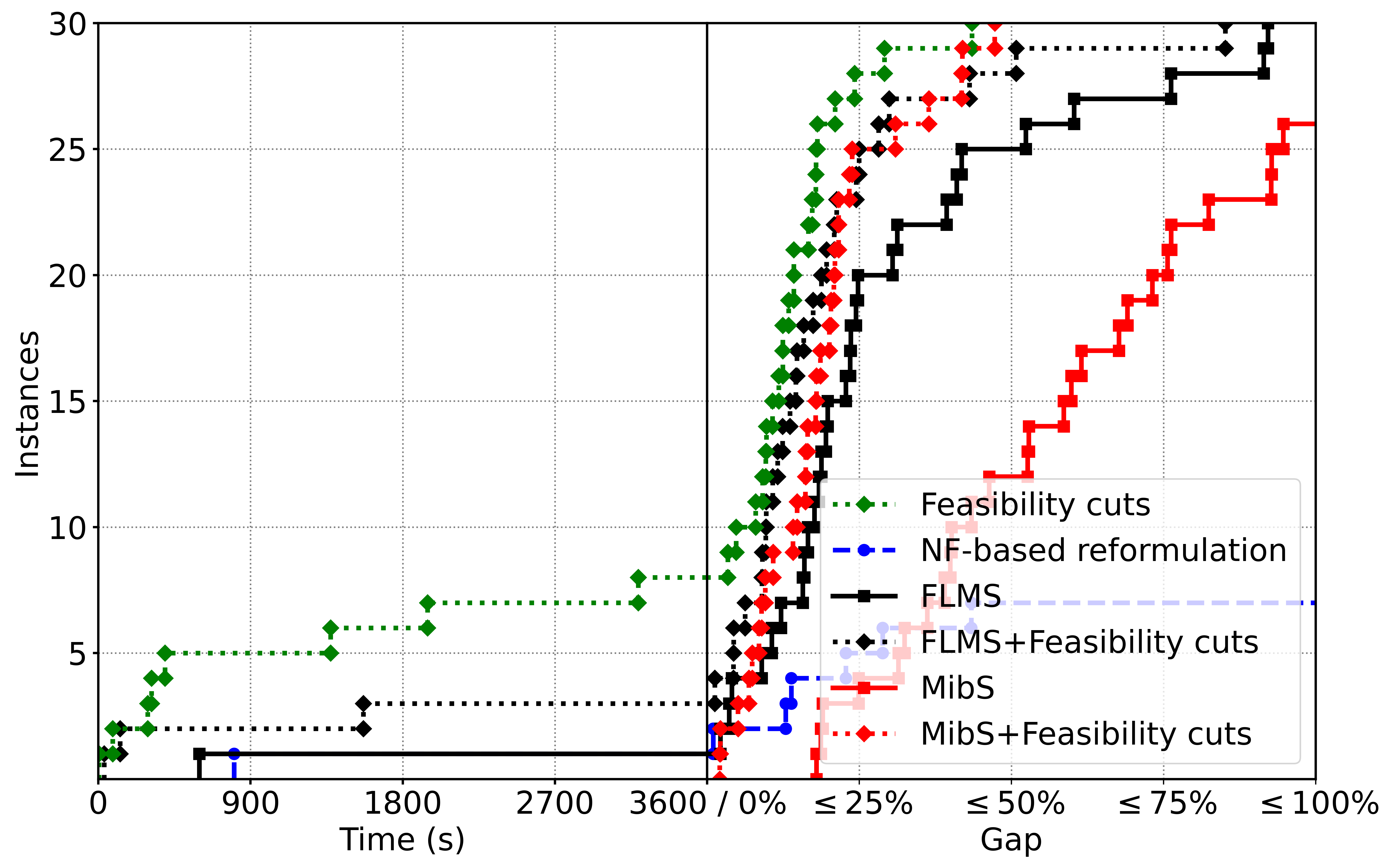}
        \caption{$n = 60,\ k = 5$}
    \end{subfigure}%
    ~
    \begin{subfigure}[t]{0.5\textwidth}
        \centering
        \includegraphics[width=\textwidth]{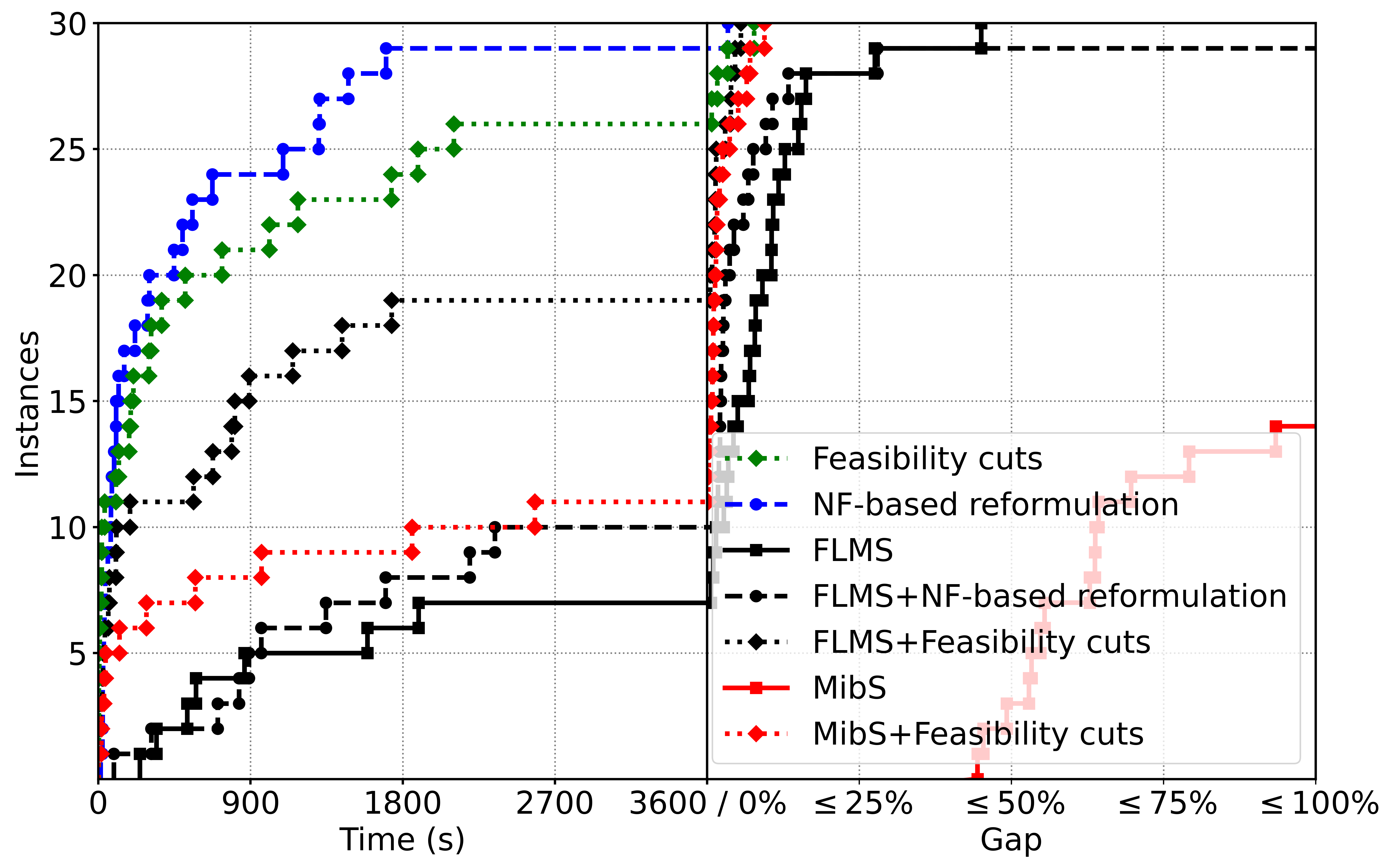}
        \caption{$n = 75,\ k = 1$}
    \end{subfigure}%
    \hfill \\
    \begin{subfigure}[t]{0.5\textwidth}
        \centering
        \includegraphics[width=\textwidth]{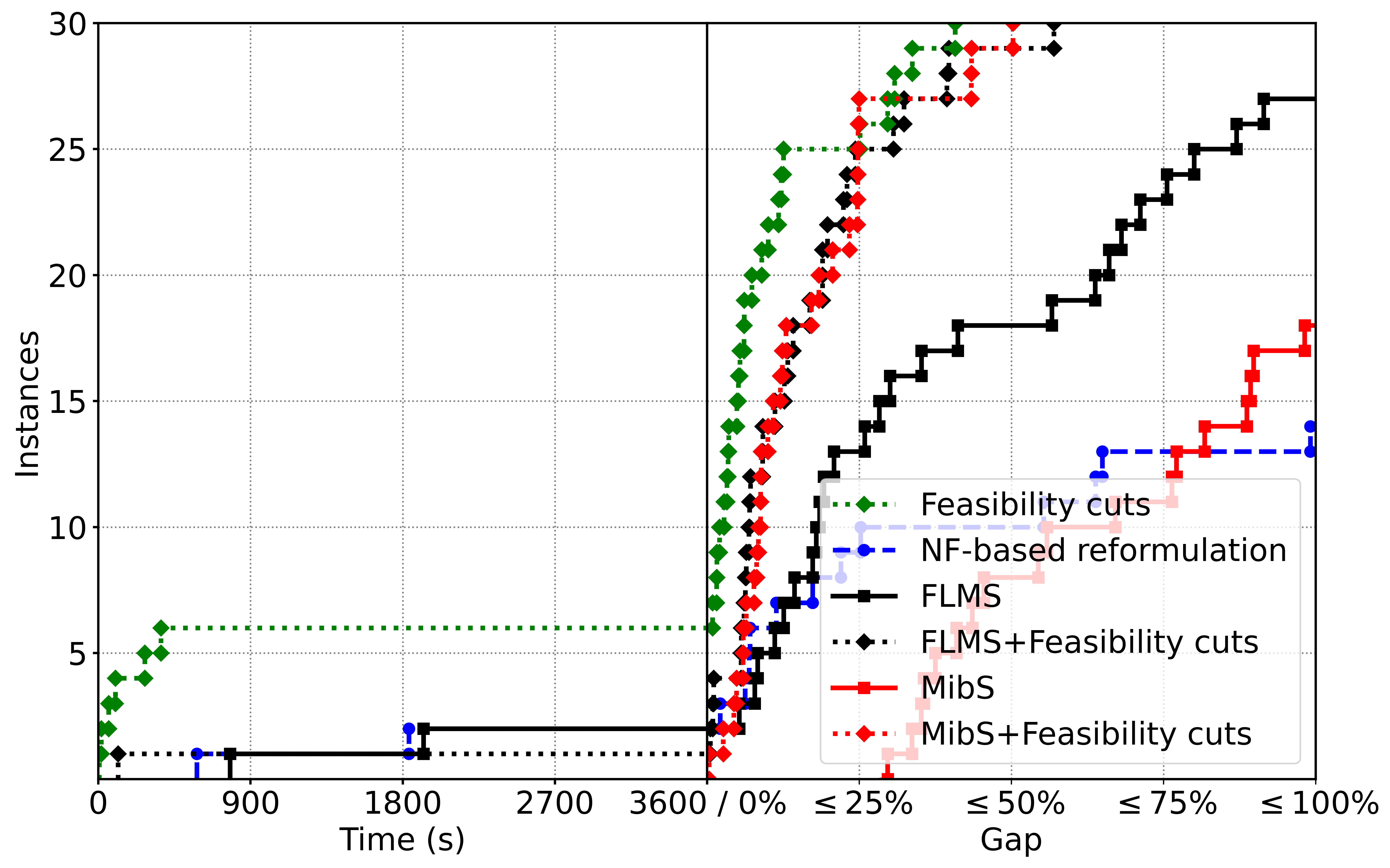}
        \caption{$n = 75,\ k = 3$}
    \end{subfigure}%
    ~
    \begin{subfigure}[t]{0.5\textwidth}
        \centering
        \includegraphics[width=\textwidth]{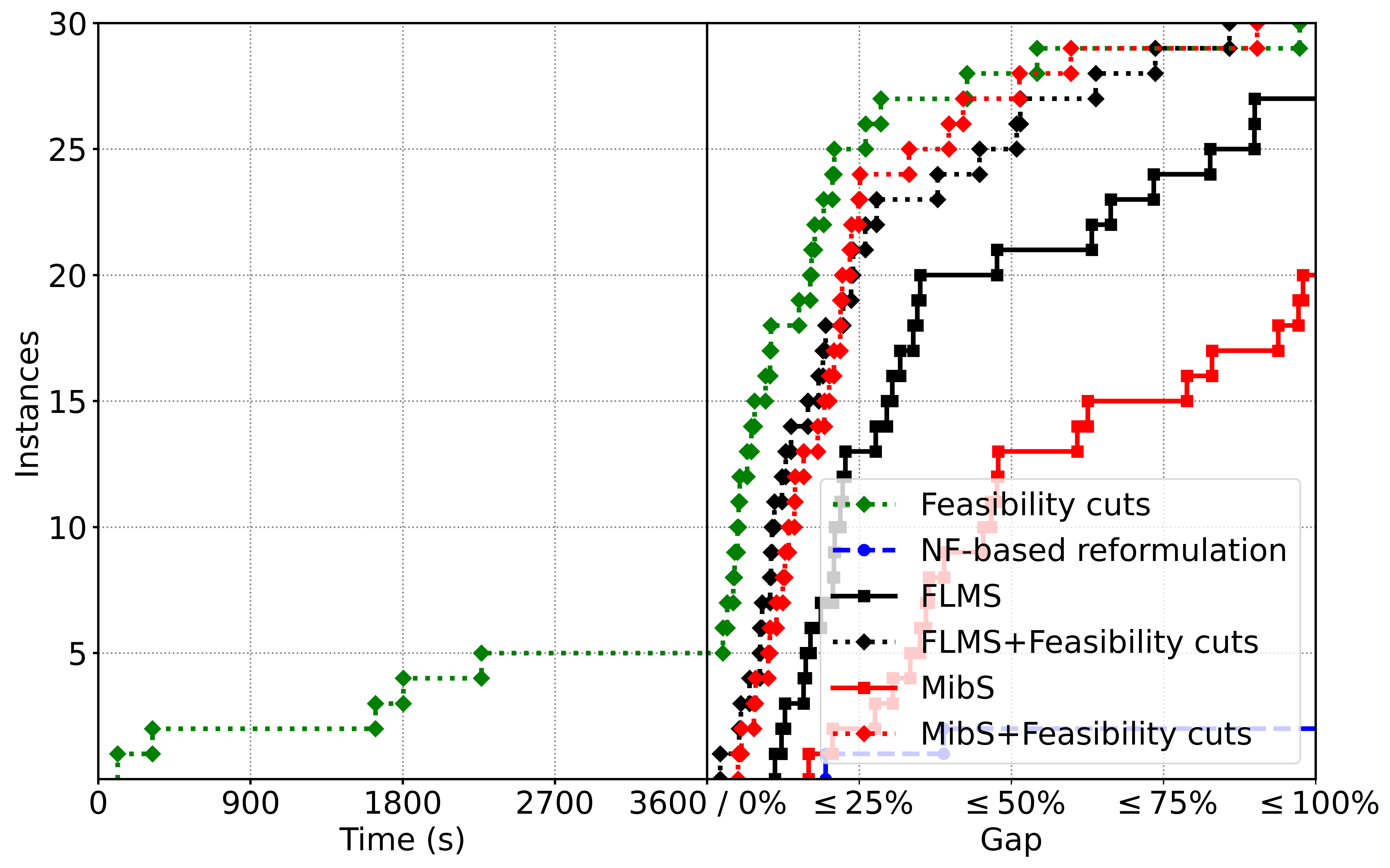}
        \caption{$n = 75,\ k = 5$}
    \end{subfigure}%
    \vspace{.2cm}
    \caption{Results for the block-diagonal follower instances (contd.)}
    \label{fig:results-multirow_part2}
\end{figure*}

\subsection{BOBILib instances}
Finally, we assess the impact of adding feasibility cuts into state-of-the-art bilevel solvers to tackle benchmark instances from BOBILib \citep{thurauf2026bobilib}. We consider the \texttt{cclw} and \texttt{inter-kp} knapsack-interdiction families, the \texttt{imkp} multidimensional-knapsack-interdiction family, and the \texttt{MIPLIB}-derived family. Within these families, we select all instances for which every leader and follower variable is binary and whose BOBILib status is either feasible or open. We exclude the substantially larger or family to prevent a single family from dominating the test set. This selection yields 125 instances.
Because compiling the full networks is intractable, we use approximate value networks capped at 50 nodes per layer. Initial experiments indicate that larger caps produce worse bounds under the same compilation and cut-generation time limit.

\medskip

\noindent \textbf{Comparison.} 
We compare \textsf{FLMS} with its version with added feasibility cuts. In the latter, we compile and generate cuts within 300 seconds, and then execute \textsf{FLMS} for 3,300 seconds.

\medskip

\noindent \textbf{Results.}
Table~\ref{tab:miplib_results} reports the number of solved instances, average solution times over solved instances, and average final gaps over unsolved instances. Feasibility cuts reduce times and gaps, allowing \textsf{FLMS} to close eight additional instances, and up to 13 using a variant of the approximate network compilation procedure in \cite{LBC24}. Appendix \ref{sec:appendix_bobilib_instances_solved} lists the solved instances.

\begin{table}[t]
    \centering
    \caption{Results for BOBILib instances with binary variables.}
    \label{tab:miplib_results}
    \resizebox{\textwidth}{!}{%
    \begin{tabular}{@{}cclrrrlrrr@{}}
        \toprule
        \multirow{2}{*}{Class}               & \multirow{2}{*}{Subclass} &                       & \multicolumn{3}{c}{\textsf{FLMS}} &  & \multicolumn{3}{c}{\textsf{FLMS}+Feasibility cuts} \\ \cmidrule(lr){4-6} \cmidrule(lr){8-10} 
        \multicolumn{2}{c}{} &
           &
          \multicolumn{1}{c}{Solved} &
          \multicolumn{1}{c}{Time (s)} &
          \multicolumn{1}{c}{Gap (\%)} &
           &
          \multicolumn{1}{c}{Solved} &
          \multicolumn{1}{c}{Time (s)} &
          \multicolumn{1}{c}{Gap (\%)} \\ \midrule
        \multirow{2}{*}{Knapsack interdiction}                                   & \texttt{cclw}                 &  & 2/15      & 3,370     & 36.27     &  & 5/15      & 854       & 20.60     \\
                                                                & \texttt{inter-kp}             &  & 0/25     & --        & 32.40     &  & 0/25      & --        & 14.19     \\ \cmidrule{4-10}
        \multirow{2}{*}{Multidimensional knapsack interdiction} & \texttt{imkp}                 &  & 0/5       & --        & 14.94     &  & 2/5       & 1,635     & 8.48      \\
                                                                & \texttt{weish}                &  & 1/47      & 2,829     & 48.43     &  & 1/47      & 66        & 33.33     \\ \cmidrule{4-10}
        MIPLIB                                                 & \multicolumn{1}{l}{} &  & 1/33          &   2,505        &     164.81      &  & 4/33          &   205        &   172.11        \\ \bottomrule
    \end{tabular}%
    }
\end{table}

\section{Conclusions} \label{sec:conclusions}
We develop a cutting-plane framework for binary bilevel programs based on the network encoding of the follower’s value function proposed by \cite{LBC24}. We introduce two exact network flow-based reformulations and separate their associated valid inequalities in the original variable space, strengthening them through tilting. Our computational results reveal a trade-off between embedding the networks in an extended formulation and using them as separation oracles: the former is effective for small networks, whereas the latter performs better as networks become larger or more numerous. On BOBILib instances classified as feasible or open, adding the cuts to \textsf{FLMS} reduces solution times and gaps, closing eight additional instances.
\bibliographystyle{plainnat}
\bibliography{references} 





\clearpage

\appendix

\section{BOBILib instances solved} \label{sec:appendix_bobilib_instances_solved}
Table \ref{tab:bobilib_instances_solved} shows the selected BOBILib instances that are closed by at least one method.

\begin{table}[!ht]
\centering
\caption{BOBILib instances solved. An asterisk (*) indicates that an instance is closed using a slightly different procedure to generate an approximate value network.}
\label{tab:bobilib_instances_solved}
\resizebox{\textwidth}{!}{%
    \begin{tabular}{@{}ccccccc@{}}
        \toprule
        Class & Subclass & Instance &  & \textsf{FLMS} &  & \textsf{FLMS}+Feasibility cuts \\ \midrule
        \multirow{8}{*}{Knapsack interdiction} & \multirow{6}{*}{\texttt{cclw}} & interdiction40-5 &  & $\checkmark$ &  &  \\
         &  & interdiction40-6 &  &  &  & $\checkmark$ \\
         &  & interdiction45-6 &  &  &  & $\checkmark$ \\
         &  & interdiction50-7 &  & $\checkmark$ &  & $\checkmark$ \\
         &  & interdiction55-7 &  &  &  & $\checkmark$ \\
         &  & interdiction55-8 &  &  &  & $\checkmark$ \\ \cmidrule{3-7}
         & \multirow{2}{*}{\texttt{inter-kp}} & K5050W06.KNP &  &  &  & $\checkmark$* \\
         &  & K5050W18.KNP &  &  &  & $\checkmark$* \\ \midrule
        \multirow{7}{*}{Multidimensional knapsack interdiction} & \multirow{4}{*}{\texttt{imkp}} & pet7.dat-0 &  &  &  & $\checkmark$* \\
         &  & pet7.dat-50 &  &  &  & $\checkmark$ \\
         &  & pet7.dat-100 &  &  &  & $\checkmark$ \\
         &  & sento2.dat-100 &  &  &  & $\checkmark$* \\ \cmidrule{3-7}
         & \multirow{3}{*}{\texttt{weish}} & weish14.dat-50 &  & $\checkmark$ &  &  \\
         &  & weish17.dat-50 &  &  &  & $\checkmark$ \\
         &  & weish17.dat-100 &  &  &  & $\checkmark$* \\ \midrule
        \multirow{5}{*}{MIPLIB} &  & lseu-0.500000 &  & $\checkmark$ &  &  \\
         &  & cap6000-0.500000 &  &  &  & $\checkmark$ \\
         &  & p0201-0.100000 &  &  &  & $\checkmark$ \\
         &  & p0282-0.900000 &  &  &  & $\checkmark$ \\
         &  & p0548-0.900000 &  &  &  & $\checkmark$ \\ \bottomrule
    \end{tabular}%
}
\end{table}

\end{document}